\definecolor{labelkey}{rgb}{0,0.08,0.45}
\definecolor{refkey}{rgb}{0,0.6,0.0}
\definecolor{Brown}{rgb}{0.45,0.0,0.05}
\definecolor{dgreen}{rgb}{0.00,0.49,0.00}
\definecolor{dblue}{rgb}{0,0.08,0.75}
\newcommand{\Argmind}[2]{\ensuremath{\underset{\substack{{#1}}}%
{\mathrm{Argmin}}\;\;#2 }}
\newcommand{\Argmax}[2]{\ensuremath{\underset{\substack{{#1}}}%
{\mathrm{Argmax}}\;\;#2 }}
\newcommand{\menge}[2]{\big\{{#1} \mid {#2}\big\}}
\newcommand{\emp}{\ensuremath{{\varnothing}}}
\newcommand{\infconv}{\ensuremath{\mbox{\small$\,\square\,$}}}
\newcommand{\scal}[2]{\left\langle{#1}\mid {#2} \right\rangle} 
\newcommand{\pscal}[2]{\boldsymbol{\langle}{#1}\mid{#2}\boldsymbol{\rangle}} 
\newcommand{\exi}{\ensuremath{\exists\,}}
\newcommand{\zeroun}{\ensuremath{\left]0,1\right[}}   
\newcommand{\HH}{\ensuremath{\mathcal H}}
\newcommand{\RR}{\ensuremath{\mathbb R}}
\newcommand{\RPP}{\ensuremath{\,\left]0,+\infty\right[}}
\newcommand{\RX}{\ensuremath{\,\left]-\infty,+\infty\right]}}
\newcommand{\NN}{\ensuremath{\mathbb N}}
\newcommand{\dom}{\ensuremath{\operatorname{dom}}}
\newcommand{\gr}{\ensuremath{\operatorname{gra}}}
\newcommand{\inte}{\ensuremath{\operatorname{int}}}
\newcommand{\cart}{\ensuremath{\mbox{\huge{$\times$}}}}
\newcommand{\Argmin}{\ensuremath{\operatorname{Argmin}}}
\newcommand{\ran}{\ensuremath{\operatorname{ran}}}
\newcommand{\zer}{\ensuremath{\operatorname{zer}}}
\newcommand{\Fix}{\ensuremath{\operatorname{Fix}}}
\newcommand{\Id}{\ensuremath{\operatorname{Id}}}
\newcommand{\weakly}{\ensuremath{\rightharpoonup}}
\newcommand{\pinf}{\ensuremath{+\infty}}
\newtheorem{theorem}{Theorem}[section]
\newtheorem{corollary}[theorem]{Corollary}
\newtheorem{proposition}[theorem]{Proposition}
\newtheorem{definition}[theorem]{Definition}
\theoremstyle{plain}{\theorembodyfont{\rmfamily}
}
\theoremstyle{plain}{\theorembodyfont{\rmfamily}
}
\theoremstyle{plain}{\theorembodyfont{\rmfamily}
}
\theoremstyle{plain}{\theorembodyfont{\rmfamily}
\newtheorem{example}[theorem]{Example}}
\theoremstyle{plain}{\theorembodyfont{\rmfamily}
\newtheorem{problem}[theorem]{Problem}}
\theoremstyle{plain}{\theorembodyfont{\rmfamily}
\newtheorem{remark}[theorem]{Remark}}
\theoremstyle{plain}{\theorembodyfont{\rmfamily}
}
\numberwithin{equation}{section}
\begin{document}

\title{\sffamily\huge Forward--partial inverse--forward splitting for
solving monotone inclusions 
\footnote{Contact author: 
L. M. Brice\~{n}o-Arias, {\ttfamily luis.briceno@usm.cl},
phone: +56 2 432 6662.
This work was supported by CONICYT under grants FONDECYT 3120054,
ECOS-CONICYT C13E03, Anillo ACT 1106, Math-Amsud N 13MATH01, and
by ``Programa de financiamiento basal'' from the Center for
Mathematical Modeling, Universidad de Chile.}}

\author{Luis M. Brice\~{n}o-Arias$^1$\\[5mm]
\small $\!^1$Universidad T\'ecnica Federico Santa Mar\'ia\\
\small Departamento de Matem\'atica\\
\small Santiago, Chile\\[4mm]
}

%\date{\ttfamily October 18, 2012 -- version 0.31}
\date{}
\maketitle

\begin{abstract}
In this paper we provide a splitting method for finding a zero of the sum of a maximally monotone operator, a lipschitzian monotone operator, and a normal cone to a closed vectorial subspace of a real Hilbert space. The problem is characterized by a simpler monotone inclusion involving 
only two operators: the partial inverse of the maximally monotone operator with respect to the vectorial subspace and a suitable lipschitzian monotone operator. By applying the Tseng's method in this context we obtain a splitting algorithm that exploits the whole structure of the original problem and generalizes partial inverse and Tseng's methods. Connections with other methods available in the literature and applications to inclusions involving $m$ maximally monotone operators, to primal-dual composite monotone inclusions, and to zero-sum games are provided.

\end{abstract}

{\small 
\noindent
{\bfseries 2000 Mathematics Subject Classification:}
Primary 47H05;
Secondary 47J25, 65K05, 90C25.

\noindent
{\bfseries Keywords:}
composite operator,
partial inverse,
monotone operator theory, 
splitting algorithms,
Tseng's method
}

\newpage

\section{Introduction}
\label{intro}
This paper is concerned with the numerical resolution of the problem of finding a zero of the sum of a set-valued maximally monotone operator $A$, a lipschitzian monotone operator $B$, and a normal cone $N_V$, where $V$ is a closed vectorial subspace of a real Hilbert space $\HH$. This problem arises in a wide range of areas such
as optimization \cite{Invp08,Spin85},
variational inequalities \cite{Lion67,Tsen90,Tsen91}, 
monotone operator theory
\cite{Ecks92,Lion79,Roc76a,Spin83}, partial differential
equations \cite{Lion67,Merc80,Zeid90},
economics \cite{Jofr07,Penn12}, signal and image processing
\cite{Aujo05,Cham97,Daub04},
evolution inclusions \cite{Aubi90,Hara81,Show97},
traffic theory
\cite{Bert82,Fuku96,Shef85}, and 
game theory \cite{Gilp12,Zink99}, among others.

In the particular case when the operator $B$ is zero, the problem is studied in \cite{Spin83} and it
is solved via the method of partial inverses.
On the other hand, when $V=\HH$, the normal cone is zero and our problem reduces to find a zero of $A+B$. In this case, the problem is studied in 
\cite{Tsen00}, where the forward-backward-forward splitting or Tseng's method is proposed for solving this problem 
(see also \cite{Siopt3} and the references therein). In addition, two methods are proposed in \cite{opti1} for finding a zero of $A+B+N_V$ in the particular case when $B$ is cocoercive.

In the general case, several
algorithms are available in the literature for finding a zero of $A+B+N_V$, but any of them exploits the intrinsic structure
of the problem. The forward-backward-forward splitting introduced in \cite{Tsen00} can be applied to the general case, but it needs to compute the resolvent of $A+N_V$, which is not always
easy to compute. It is preferable to activate $A$
and $N_V$ separately. Other ergodic approaches for solving the problem can be found in \cite{Bot13,Pass79}.
A disadvantage of these methods is the presence of vanishing parameters, which usually lead to numerical instabilities. The algorithms proposed in
\cite{Siopt3,Joca09,Spin83} permit to find a zero of the sum of finitely many maximally
monotone operators by activating them independently and without considering vanishing parameters. However, these methods involve implicit steps on $B$ by using its resolvent, which is not easy to compute in general. An algorithm proposed in \cite{Comb12} overcome this difficulty by activating explicitly the operator $B$. However, this method does not take advantage of
the vector subspace involved and, as a consequence, it needs to store additional auxiliary variables at each iteration, which can be difficult for high dimensional problems.

In this paper we propose a fully split method for finding a zero of $A+B+N_V$ by exploiting each of its intrinsic properties. 
The proposed algorithm computes, at each iteration, explicit steps on $B$ and the resolvent of the partial inverse of $A$ with respect to $V$ \cite{Spin83}, which can be explicitly found in several cases. In a particular instance, this resolvent becomes a Douglas-Rachford step \cite{Lion79,Svai10}, which activates separately $A$ and $N_V$. Hence, in this case our method can be perceived as a forward-Douglas-Rachford-forward splitting.
The proposed algorithm generalizes partial inverse 
and Tseng's methods in the particular instances when $B=0$ and $V=\HH$, respectively. We also provide connections with other methods in the literature and we illustrate the flexibility of this framework via some applications to inclusions involving $m$ maximally monotone operators, to primal-dual composite monotone inclusions, and to zero-sum games. In the application to primal-dual inclusions we introduce a new operation between set-valued operators, called {\em partial sum} with respect to a closed vectorial subspace, which preserves monotonicity and takes a central role in the problem and algorithm. On the other hand, in continuous zero-sum games, we provide an interesting splitting algorithm for calculating a Nash equilibrium that avoids the computation of the projection onto mixed strategy spaces in infinite dimensions by  performing simpler projections alternately.

The paper is organized as follows. In Section~\ref{sec:2} we provide
the notation and some preliminaries. We also obtain a relaxed version of Tseng's method \cite{Tsen00}, which is interesting in its own right. In
Section~\ref{sec:3} a characterization of Problem~\ref{prob:1} in terms of two appropriate monotone operators is given and a method for solving this problem is derived from the relaxed version of Tseng's algorithm. Moreover, we provide connections with other methods in the literature. Finally, in Section~\ref{sec:4} we apply our method to 
the problem of finding a zero of a sum of $m$ maximally
monotone operators and a lipschitzian monotone operator,
to a primal-dual composite monotone inclusion, and to continuous zero-sum games. The methods derived in each instance generalize and improve available algorithms in the literature in each context.
\section{Notation and Preliminaries}
\label{sec:2}
Throughout this paper, $\HH$ is a real Hilbert space with 
scalar product denoted by $\scal{\cdot}{\cdot}$ and  
associated norm $\|\cdot\|$. The symbols $\weakly$ and $\to$
denote, respectively, weak and strong convergence and $\Id$ denotes
the identity operator. The indicator function of a subset $C$ of $\HH$
is $\iota_C$, which takes the value $0$ in $C$ and $\pinf$ in $\HH\setminus C$.
If $C$ is non-empty, closed, and convex, the projection of $x$ onto
$C$, denoted by $P_Cx$, is the unique point in $\Argmin_{y\in
C}\|x-y\|$, and the normal cone to $C$ is the maximally monotone
operator
\begin{equation}
\label{e:normalcone}
N_C\colon\HH\to 2^{\HH}\colon x\mapsto
\begin{cases}
\menge{u\in\HH}{(\forall y\in C)\:\:\scal{y-x}{u}\leq0},\quad
&{\rm if\: }x\in C;\\
\emp,&{\rm otherwise}.
\end{cases}
\end{equation}
An operator $T\colon\HH\to\HH$ is $\beta$--cocoercive for
some $\beta\in\RPP$ if, for every $x\in\HH$ and $y\in\HH$,
$\scal{x-y}{Tx-Ty}\geq\beta\|Tx-Ty\|^2$, it is $\chi$-lipschitzian if, for every $x\in\HH$ and $y\in\HH$,
$\|Tx-Ty\|\leq\chi\|x-y\|$, it is non expansive if it is $1$-lipschitzian, 
and the set of fixed points of $T$ is given by
$\Fix T=\menge{x\in\HH}{Tx=x}$.

We denote by $\gr A=\menge{(x,u)\in\HH\times\HH}{u\in Ax}$ the graph
of a set-valued operator $A\colon\HH\to 2^{\HH}$, by
$\dom A=\menge{x\in\HH}{Ax\neq\emp}$ its domain, by
$\zer A=\menge{x\in\HH}{0\in Ax}$ its set of zeros, by $\ran A=\menge{u\in\HH}{(\exi x\in\HH)\:\:u\in Ax}$ its range, and by
$J_A=(\Id+A)^{-1}$ its resolvent. If $A$ is monotone, i.e.,
for every $(x,u)$ and $(y,v)$ in $\gr A$, 
$\scal{x-y}{u-v}\geq0$, then $J_A$ is 
single-valued and non expansive. In addition, if $\ran (\Id+A)=\HH$, $A$ is maximally monotone and $\dom J_A=\HH$. Let $A\colon\HH\to 2^{\HH}$ be
maximally monotone. The reflection operator of $A$ is $R_A=2J_A-\Id$,
which is non expansive. The partial inverse of $A$ with respect to
a vector subspace $V$ of $\HH$, denoted by $A_V$, is defined by 
\begin{equation}
\label{e:partialinv}
(\forall (x,y)\in\HH^2)\quad y\in A_Vx\quad\Leftrightarrow\quad 
(P_Vy+P_{V^{\bot}}x)\in A(P_Vx+P_{V^{\bot}}y).
\end{equation}
Note that $A_{\HH}=A$ and $A_{\{0\}}=A^{-1}$.
The following properties of the partial inverse will be useful throughout this paper.
\begin{proposition}
\label{p:propiedpi}
Let $A\colon\HH\to 2^{\HH}$ be a set-valued operator and let $V$ be a
vector subspace of $\HH$. Then the following hold.
\begin{enumerate}
 \item\label{p:propiedpii} $(A_V)^{-1}=(A^{-1})_V=A_{V^{\bot}}$.
 \item\label{p:propiedpiii} $P_V(A+N_V)^{-1}P_V=P_V(A_{V^{\bot}}+N_V)P_V$.
\end{enumerate}
\end{proposition}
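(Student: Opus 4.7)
The plan is to verify both parts by unfolding the definition \eqref{e:partialinv} and observing that the relations characterizing the graphs on each side reduce to identical conditions on $A$.

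For part (i), I would note that by the defining equivalence \eqref{e:partialinv}, the relation $(x,y)\in\gr A_V$ is given by
\[
P_V y+P_{V^{\bot}}x\in A(P_V x+P_{V^{\bot}}y).
\]
Swapping the roles of $x$ and $y$ shows that $(x,y)\in\gr(A_V)^{-1}$ amounts to the condition $P_V x+P_{V^{\bot}}y\in A(P_V y+P_{V^{\bot}}x)$. On the other hand, $(x,y)\in\gr(A^{-1})_V$ reads $P_V y+P_{V^{\bot}}x\in A^{-1}(P_V x+P_{V^{\bot}}y)$, which is the same condition after inverting $A$; and $(x,y)\in\gr A_{V^{\bot}}$ is obtained by replacing $V$ with $V^{\bot}$ in \eqref{e:partialinv}, yielding $P_{V^{\bot}}y+P_V x\in A(P_{V^{\bot}}x+P_V y)$. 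All three conditions are literally the same relation in $\gr A$, so the three operators coincide.

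For part (ii), I would evaluate both set-valued operators at a generic $x\in\HH$. For the left-hand side, $x'\in(A+N_V)^{-1}(P_V x)$ iff $x'\in V$ and there exists $u\in A x'$ with $P_V x-u\in V^{\bot}$, i.e., $P_V u=P_V x$. Therefore
\[
P_V(A+N_V)^{-1}P_V x=\bigl\{x'\in V \mid \exists\, u\in A x',\ P_V u=P_V x\bigr\}.
\]
For the right-hand side, I would observe that for $x'\in V$ one has $P_V x'=x'$ and $P_{V^{\bot}}x'=0$, so the defining relation of $A_{V^{\bot}}$ simplifies: $y\in A_{V^{\bot}}x'$ iff $x'+P_{V^{\bot}}y\in A(P_V y)$, equivalently there exists $u\in A(P_V y)$ with $P_V u=x'$ and $P_{V^{\bot}}u=P_{V^{\bot}}y$. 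Adding the normal cone contributes an arbitrary element of $V^{\bot}$, which is killed by the outer $P_V$. Taking $P_V$ then yields
\[
P_V(A_{V^{\bot}}+N_V)P_V x=\bigl\{a\in V \mid \exists\, u\in A a,\ P_V u=P_V x\bigr\},
\]
matching the previous set.

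The argument is almost entirely bookkeeping with orthogonal decompositions, so there is no real obstacle; the one place to be careful is in part (ii), where one must correctly track that $N_V$ produces an arbitrary $V^{\bot}$-component that is annihilated by the outer $P_V$, and simultaneously that the image under $A_{V^{\bot}}$ of a point $x'\in V$ carries exactly the information ``some element of $\gr A$ has $V$-part equal to $x'$'', which is precisely the information extracted from $(A+N_V)^{-1}$ after projecting with $P_V$.
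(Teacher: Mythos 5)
Your proposal is correct and follows essentially the same route as the paper: both parts are proved by unfolding the defining equivalence \eqref{e:partialinv} and checking that the graph conditions on each side coincide, with the same observation in part \ref{p:propiedpiii} that the normal cone contributes an arbitrary $V^{\bot}$-component annihilated by the outer $P_V$. The only implicit point (shared with the paper) is that writing $P_{(V^\bot)^\bot}=P_V$ in part \ref{p:propiedpii} uses closedness of $V$.
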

\begin{proof}
\ref{p:propiedpii}: Let $(x,u)\in\HH^2$. We have from \eqref{e:partialinv} that
\begin{align}
u\in (A_V)^{-1}x\quad&\Leftrightarrow\quad
x\in A_Vu\nonumber\\
&\label{e:auxpi}\Leftrightarrow\quad
P_Vx+P_{V^{\bot}}u\in A(P_Vu+P_{V^{\bot}}x)\\
&\Leftrightarrow\quad
P_Vu+P_{V^{\bot}}x\in A^{-1}(P_Vx+P_{V^{\bot}}u)\nonumber\\
&\Leftrightarrow\quad
x\in (A^{-1})_Vu.
\end{align}
On the other hand, it follows from \eqref{e:auxpi} and \eqref{e:partialinv} that $u\in (A_V)^{-1}x$ is equivalent to $u\in A_{V^{\bot}}x$.
\ref{p:propiedpiii}: Let $(x,u)\in\HH^2$. We deduce from \ref{p:propiedpii} and \eqref{e:partialinv} that
\begin{align}
u\in P_V(A+N_V)^{-1}(P_Vx) \quad 
&\Leftrightarrow\quad (u\in V)\quad P_Vx\in Au+N_Vu\nonumber\\
&\Leftrightarrow\quad(u\in V)(\exi y\in V^{\bot})\quad P_Vx-y\in Au\nonumber\\ 
&\Leftrightarrow\quad(u\in V)(\exi y\in V^{\bot})\quad u-y\in A_{V^{\bot}}(P_Vx)\nonumber\\ 
&\Leftrightarrow\quad u\in P_V(A_{V^{\bot}}+N_V)(P_Vx), 
\end{align} 
which yields the result.
\end{proof}

The following result is a relaxed version of the method originally proposed in \cite{Tsen00} over some modifications developed in \cite{Nfao1,Siopt3}. 
\begin{proposition}
\label{p:tsengrel}
Let $\mathcal{A}\colon\HH\to 2^{\HH}$ be maximally monotone and let $\mathcal{B}\colon\HH\to\HH$ be monotone and $\eta$--lipschitzian such that $\zer(\mathcal{A}+\mathcal{B})\neq\varnothing$. Moreover, let $z_0\in\HH$, let $\varepsilon\in\left]0,\max\{1,1/2\eta\}\right[$,
let $(\delta_n)_{n\in\NN}$ be a sequence in
$\left[\varepsilon,(1/\eta)-\varepsilon\right]$, let
$(\lambda_n)_{n\in\NN}$ be a sequence in $\left[\varepsilon,1\right]$, and iterate, for every $n\in\NN$,
\begin{align}
\label{e:tsengrel}
&\left 
\lfloor 
\begin{array}{l}
r_n=z_n-\delta_n\mathcal{B}z_n\\
s_n=J_{\delta_n\mathcal{A}}r_n\\
t_n=s_n-\delta_n\mathcal{B}s_n\\
z_{n+1}=z_n+\lambda_n(t_n-r_n).
\end{array}
\right.
\end{align}
Then, $z_n\weakly\bar{z}$ for some $\bar{z}\in\zer(\mathcal{A}+\mathcal{B})$ and $z_{n+1}-z_n\to0$.
\end{proposition}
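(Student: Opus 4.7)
The plan is to establish that $(z_n)_{n\in\NN}$ is Fej\'er monotone with respect to $\zer(\mathcal{A}+\mathcal{B})$, that $\|z_n-s_n\|\to 0$, and that every weak cluster point of $(z_n)$ lies in $\zer(\mathcal{A}+\mathcal{B})$; an Opial-type lemma then closes the argument. The cornerstone is the observation that the resolvent step $s_n=J_{\delta_n\mathcal{A}}r_n$ yields $(r_n-s_n)/\delta_n\in\mathcal{A}s_n$, and combining this with the identity $\mathcal{B}s_n=(s_n-t_n)/\delta_n$ gives the inclusion
\begin{equation*}
\frac{r_n-t_n}{\delta_n}\in(\mathcal{A}+\mathcal{B})s_n.
\end{equation*}
For any $z^*\in\zer(\mathcal{A}+\mathcal{B})$, monotonicity of $\mathcal{A}+\mathcal{B}$ then delivers $\langle s_n-z^*,r_n-t_n\rangle\geq 0$, hence $\langle s_n-z^*,z_{n+1}-z_n\rangle\leq 0$ by the update rule.

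The next step is to expand $\|z_{n+1}-z^*\|^2$ by splitting $z_n-z^*=(z_n-s_n)+(s_n-z^*)$ and using $t_n-r_n=-(z_n-s_n)+\delta_n(\mathcal{B}z_n-\mathcal{B}s_n)$. The monotone inequality just derived eliminates the contribution from $(s_n-z^*)$, leaving only quadratic terms in $\|z_n-s_n\|$, $\|\mathcal{B}z_n-\mathcal{B}s_n\|$, and their inner product. The $\eta$-Lipschitz property and Cauchy--Schwarz allow me to bound both the cross term and the squared gradient difference by multiples of $\|z_n-s_n\|^2$, producing a coefficient of the form $\lambda_n(1-\delta_n\eta)[\lambda_n(1-\delta_n\eta)-2]$. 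The assumed ranges $\lambda_n\in[\varepsilon,1]$ and $\delta_n\eta\leq 1-\varepsilon\eta$ guarantee this factor is bounded above by $-\varepsilon^2\eta$, so I obtain
\begin{equation*}
\|z_{n+1}-z^*\|^2\leq \|z_n-z^*\|^2 - \varepsilon^2\eta\,\|z_n-s_n\|^2.
\end{equation*}
This yields Fej\'er monotonicity, $\sum_n\|z_n-s_n\|^2<\infty$, and in particular $z_n-s_n\to 0$ and $z_{n+1}-z_n\to 0$ (the latter via $\|t_n-r_n\|\leq(1+\delta_n\eta)\|z_n-s_n\|\leq 2\|z_n-s_n\|$).

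For the cluster-point identification, suppose $z_{n_k}\weakly\bar{z}$. Then $s_{n_k}\weakly\bar{z}$ too, while $(r_{n_k}-t_{n_k})/\delta_{n_k}\to 0$. Since $\dom\mathcal{B}=\HH$, $\mathcal{A}+\mathcal{B}$ is maximally monotone, and its graph is sequentially closed in the weak--strong topology, yielding $0\in(\mathcal{A}+\mathcal{B})\bar{z}$, i.e., $\bar{z}\in\zer(\mathcal{A}+\mathcal{B})$. Combined with Fej\'er monotonicity, the standard Opial lemma produces $z_n\weakly\bar{z}$ for some $\bar{z}\in\zer(\mathcal{A}+\mathcal{B})$. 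The main technical obstacle in this plan is the algebraic derivation of the descent inequality with the correct constant: in the original ($\lambda_n\equiv 1$) Tseng method the cross term $2\lambda_n\delta_n(1-\lambda_n)\langle z_n-s_n,\mathcal{B}z_n-\mathcal{B}s_n\rangle$ vanishes, but in the relaxed version it must be absorbed via Cauchy--Schwarz and the Lipschitz bound, and the resulting factorization $\lambda_n(1-\delta_n\eta)[\lambda_n(1-\delta_n\eta)-2]$ is precisely what ties the admissible ranges of $\lambda_n$ and $\delta_n$ together.
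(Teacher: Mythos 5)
Your proof is correct, and its overall architecture (quasi-Fej\'er monotonicity, summability of $\|z_n-s_n\|^2$, asymptotic regularity, weak--strong closedness of $\gr(\mathcal{A}+\mathcal{B})$, and an Opial-type lemma) coincides with the paper's. The one genuine difference is in how the descent inequality is obtained. The paper writes $z_{n+1}-z=(1-\lambda_n)(z_n-z)+\lambda_n\big(s_n-\delta_n(\mathcal{B}s_n-\mathcal{B}z_n)-z\big)$, applies the convexity identity for $\|\cdot\|^2$, and then invokes the unrelaxed estimate of Tseng's Lemma~3.1 for the middle term, arriving at $\|z_{n+1}-z\|^2\leq\|z_n-z\|^2-\lambda_n\big(1-(\delta_n\eta)^2\big)\|s_n-z_n\|^2-\lambda_n(1-\lambda_n)\|t_n-r_n\|^2$ (the published display drops a factor $\lambda_n$ on the middle term, harmlessly since $\lambda_n\geq\varepsilon$). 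You instead expand $\|z_n-z^*+\lambda_n(t_n-r_n)\|^2$ directly, kill $\langle s_n-z^*,t_n-r_n\rangle$ by monotonicity, and absorb the surviving cross term $2\lambda_n(1-\lambda_n)\delta_n\langle z_n-s_n,\mathcal{B}z_n-\mathcal{B}s_n\rangle$ by Cauchy--Schwarz and the Lipschitz bound; I verified that the resulting coefficient does factor as $\lambda_n(1-\delta_n\eta)\big[\lambda_n(1-\delta_n\eta)-2\big]\leq-\varepsilon^2\eta$, so your single-term bound is valid. What your route buys is self-containedness (no appeal to Tseng's lemma) and an explicit display of exactly where the relaxation parameter interacts with the step size; what the paper's route buys is a shorter computation and a bound that separately exhibits the two residuals $\|s_n-z_n\|^2$ and $\|t_n-r_n\|^2$, which it then feeds into its summability lemma. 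Both yield the same conclusions, and your final identification step matches the paper's verbatim in substance.
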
 
\begin{proof}
First note that \eqref{e:tsengrel} yields
\begin{equation}
\label{e:Asn}
(\forall n\in\NN)\quad \delta_n^{-1}(r_n-s_n)\in\mathcal{A}s_n.
\end{equation}
Let $z\in\zer(\mathcal{A}+\mathcal{B})$ and fix $n\in\NN$. It follows from \cite[Lemma~3.1]{Tsen00} and \eqref{e:tsengrel} that
\begin{align}
\label{e:aux1}
\|z_{n+1}-z\|^2&=\|(1-\lambda_n)(z_n-z)+\lambda_n\big(s_n-\delta_n(\mathcal{B}s_n-\mathcal{B}z_n)-z\big)\|^2\nonumber\\
&=(1-\lambda_n)\|z_n-z\|^2+\lambda_n\|s_n-\delta_n(\mathcal{B}s_n-\mathcal{B}z_n)-z\|^2-\lambda_n(1-\lambda_n)\|t_n-r_n\|^2\nonumber\\
&\leq (1-\lambda_n)\|z_n-z\|^2+\lambda_n\big(\|z_n-z\|^2+\delta_n^2\|\mathcal{B}s_n-\mathcal{B}z_n\|^2-\|s_n-z_n\|^2\big)\nonumber\\
&\hspace{8.85cm}-\lambda_n(1-\lambda_n)\|t_n-r_n\|^2\nonumber\\
&\leq \|z_n-z\|^2-\big(1-(\delta_n\eta)^2\big)\|s_n-z_n\|^2-\lambda_n(1-\lambda_n)\|t_n-r_n\|^2.
\end{align}
Hence, since $\delta_n<1/\eta$ and $0<\lambda_n\leq1$, we obtain $\|z_{n+1}-z\|^2\leq\|z_n-z\|^2$, which yields the boundedness of the sequence $(z_k)_{k\in\NN}$. Moreover, we deduce from \eqref{e:aux1} and \cite[Lemma~2.1]{Siopt3} that $(\|s_k-z_k\|^2)_{k\in\NN}$ and $(\|t_k-r_k\|^2)_{k\in\NN}$ are summable and, in particular,
\begin{equation}
\label{e:tozero}
s_k-z_k\to 0\quad\text{and}\quad t_k-r_k\to 0,
\end{equation}
which yields $z_{k+1}-z_k=\lambda_k(t_k-r_k)\to0$.
By setting, for every $k\in\NN$, $u_k=\delta_k^{-1}(r_k-t_k)$,
it follows from \eqref{e:tsengrel}, \eqref{e:Asn}, and \eqref{e:tozero} that 
\begin{equation}
\label{e:everyt}
(\forall k\in\NN)\quad 0\leftarrow u_k=\delta_k^{-1}(r_k-s_k)+\mathcal{B}s_k\in(\mathcal{A}+\mathcal{B})s_k.
\end{equation}
Now let us take $w\in\HH$ be any sequential weak cluster point of $(z_k)_{k\in\NN}$, say $z_{k_\ell}\weakly w$.
Then, it follows from \eqref{e:tozero} and \eqref{e:everyt} that 
\begin{equation}
s_{k_\ell}\weakly w,\:\:u_{k_\ell}\to 0, \quad\text{and}\quad (s_{k_\ell},u_{k_\ell})\in\gr(\mathcal{A}+\mathcal{B}). 
\end{equation}
Since $\mathcal{B}$ is monotone and continuous, it is maximally monotone \cite[Corollary~20.25]{Livre1}. Moreover, since $\dom\mathcal{B}=\HH$, we deduce from \cite[Corollary~24.4(i)]{Livre1} that $\mathcal{A}+\mathcal{B}$ is maximally monotone and, hence, its graph is sequentially closed in $\HH^{\rm weak}\times\HH^{\rm strong}$ \cite[Proposition~20.33(ii)]{Livre1}. Therefore, we conclude from \eqref{e:everyt} that $w\in\zer(\mathcal{A}+\mathcal{B})$ and from 
\cite[Lemma~2.2]{Siopt3} we deduce that there exists $\bar{z}\in\zer(\mathcal{A}+\mathcal{B})$ such that $z_n\weakly\bar{z}$ which yields the result. 
\end{proof}

\begin{remark}
As in \cite[Theorem~2.5]{Siopt3}, absolutely summable errors can be incorporated in each step of the algorithm in \eqref{e:tsengrel}. However, for ease of presentation throughout the document, we only provide the error free version.
\end{remark}

For complements and further background 
on monotone operator theory and algorithms, the reader is referred to
\cite{Aubi90,Livre1,Spin83,Zeid90}.

\section{Forward--Partial Inverse--Forward Splitting}
\label{sec:3}
We aim at solving the following problem.
\begin{problem}
\label{prob:1}
Let $\HH$ be a real Hilbert space and let $V$ be a closed vector subspace of $\HH$. Let $A\colon\HH\to 2^{\HH}$ be a maximally monotone operator and let $B\colon \HH\to\HH$ be a monotone and $\chi$--lipschitzian operator.
The problem is to
\begin{equation}
\label{e:mean0}
\text{find}\;\;x\in\HH\quad\text{such that}\quad 
0\in Ax+Bx+N_Vx,
\end{equation}
under the assumption $\zer(A+B+N_V)\neq\varnothing$. 
\end{problem}

In this section we provide our method for solving
Problem~\ref{prob:1}. We first provide a characterization of the solutions to Problem~\ref{prob:1}, which motivates our algorithm. Its convergence to a solution to Problem~\ref{prob:1} is then proved.

\subsection{Characterization}
The following result provides a characterization of the solutions to Problem~\ref{prob:1} in terms of two suitable monotone operators.
\begin{proposition}
\label{p:1}
Let $\gamma\in\RPP$ and $\HH$, $A$, $B$, and $V$ be as in
Problem~\ref{prob:1}.
Define 
\begin{equation}
\label{e:defmaxmon2}
\begin{cases}
\mathcal{A}_{\gamma}=(\gamma A)_V\colon\HH\to 2^{\HH}\\
\mathcal{B}_{\gamma}=\gamma P_V\circ B\circ P_V\colon\HH\to V.
\end{cases} 
\end{equation}
Then the following hold.
\begin{enumerate}
\item\label{p:1i} $\mathcal{A}_{\gamma}$ is maximally monotone and,
for every $\delta\in\RPP$ and $x\in\HH$,
$J_{\delta\mathcal{A}_{\gamma}}x=P_Vp+\gamma P_{V^{\bot}}q$, where
$p$ and $q$ in $\HH$ are such that $x=p+\gamma q$ and 
\begin{equation}
\label{e:Spininc}
\frac{P_Vq}{\delta}+P_{V^{\bot}}q\in
A\left(P_Vp+\frac{P_{V^{\bot}}p}{\delta}\right).
\end{equation}
In particular, for every $x\in\HH$, 
$J_{\mathcal{A}_{\gamma}}x=2P_VJ_{\gamma A}-J_{\gamma
A}+\Id-P_V=(\Id+R_{N_V}R_{\gamma A})/2$.
\item\label{p:1ii} $\mathcal{B}_{\gamma}$ is
$\gamma\chi$--lipschitzian and monotone.
\item\label{p:1iii} Let $x\in\HH$. Then $x$ is a solution to
Problem~\ref{prob:1}
if and only if $x\in V$ and
\begin{equation}
\hspace{-.2cm}\big(\exi y\in V^{\bot}\cap(Ax+Bx)\big)\:\:\text{such that}\:\:\:
x+\gamma
(y-P_{V^{\bot}}Bx)\in\zer(\mathcal{A}_{\gamma}+\mathcal{B}_{\gamma}). 
\end{equation}
\end{enumerate}

\end{proposition}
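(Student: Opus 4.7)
I would first record maximal monotonicity of $\mathcal{A}_\gamma = (\gamma A)_V$; this is the classical Spingarn property of the partial inverse \cite{Spin83} and is also latent in Proposition~\ref{p:propiedpi}. To derive the resolvent formula, set $s = J_{\delta \mathcal{A}_\gamma}x$, so that $(x-s)/\delta \in (\gamma A)_V s$, and unfold this via \eqref{e:partialinv} to get
\[
\tfrac{1}{\delta}P_V(x-s) + P_{V^\bot}s \;\in\; \gamma A\!\left(P_V s + \tfrac{1}{\delta}P_{V^\bot}(x-s)\right).
\]
Now introduce the ansatz $x = p + \gamma q$ and $s = P_V p + \gamma P_{V^\bot}q$; projecting $x$ and $s$ on $V$ and on $V^\bot$ shows that such $p,q$ exist uniquely, and gives $P_V s = P_V p$, $P_{V^\bot}s = \gamma P_{V^\bot}q$, $P_V(x-s) = \gamma P_V q$, $P_{V^\bot}(x-s) = P_{V^\bot}p$. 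Substituting and cancelling $\gamma$ yields \eqref{e:Spininc}. Specializing to $\delta=1$, the inclusion becomes $q \in Ap$, so that $p = J_{\gamma A}x$ and $\gamma q = x - J_{\gamma A}x$; plugging this back into $s = P_V p + \gamma P_{V^\bot}q$ and using $P_{V^\bot} = \Id - P_V$ gives $J_{\mathcal{A}_\gamma}x = 2P_V J_{\gamma A}x - J_{\gamma A}x + x - P_V x$. The identity $(\Id + R_{N_V}R_{\gamma A})/2$ follows by expanding $R_{N_V} = 2P_V - \Id$ and $R_{\gamma A} = 2J_{\gamma A} - \Id$.

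\textbf{Plan for (ii).} This is a short direct calculation. For $x,y\in\HH$, non-expansiveness of $P_V$ together with $\chi$-lipschitzness of $B$ give $\|\mathcal{B}_\gamma x - \mathcal{B}_\gamma y\| \leq \gamma \|BP_V x - BP_V y\| \leq \gamma\chi \|P_V x - P_V y\| \leq \gamma\chi\|x-y\|$. For monotonicity, self-adjointness of $P_V$ yields
\[
\scal{x-y}{\mathcal{B}_\gamma x - \mathcal{B}_\gamma y} = \gamma\scal{P_V x - P_V y}{BP_V x - BP_V y} \geq 0.
\]

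\textbf{Plan for (iii).} For the forward direction, $x$ solving Problem~\ref{prob:1} forces $x\in V$ (so that $N_V x \neq \emp$) and supplies $u \in V^\bot$ with $-Bx - u \in Ax$; setting $y = -u$ gives $y \in V^\bot \cap (Ax + Bx)$. Letting $z = x + \gamma(y - P_{V^\bot}Bx)$, using $x\in V$, $y\in V^\bot$, $P_{V^\bot}Bx\in V^\bot$, one reads off $P_V z = x$ and $P_{V^\bot}z = \gamma(y - P_{V^\bot}Bx)$, so that $\mathcal{B}_\gamma z = \gamma P_V Bx$. The target inclusion $-\mathcal{B}_\gamma z \in \mathcal{A}_\gamma z = (\gamma A)_V z$, with $w := -\gamma P_V Bx \in V$, unfolds through \eqref{e:partialinv} to
\[
-\gamma P_V Bx + \gamma(y - P_{V^\bot}Bx) \in \gamma A(x),
\]
which collapses to $y - Bx \in Ax$, i.e.\ precisely $y \in Ax + Bx$. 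Every implication is reversible, so running the same chain backwards handles the converse.

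\textbf{Main obstacle.} The only real hurdle is the projection book-keeping in (i): choosing the ansatz $s = P_V p + \gamma P_{V^\bot} q$ that matches the $\delta$-scaling inside \eqref{e:partialinv}, and unwinding the projections cleanly. Once that calculation is in hand, (iii) is essentially a repackaging of it: the element $z = x + \gamma(y - P_{V^\bot}Bx)$ is exactly the right preimage under the decomposition from (i), so the verification there reduces to a mechanical substitution.
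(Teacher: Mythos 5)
Your proposal is correct and follows essentially the same route as the paper: maximal monotonicity of $(\gamma A)_V$ from Spingarn, the same chain of equivalences linking \eqref{e:Spininc} to $J_{\delta\mathcal{A}_\gamma}x=P_Vp+\gamma P_{V^\bot}q$ (you merely run it from the resolvent side, which also cleanly settles existence of $p,q$), the same one-line computation for (ii), and the same unfolding of $-\gamma P_VBx\in(\gamma A)_V\big(x+\gamma(y-P_{V^\bot}Bx)\big)$ for (iii). No gaps.
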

\begin{proof}
\ref{p:1i}: Since $\gamma A$ is maximally monotone, $\mathcal{A}_{\gamma}$ inherits this property \cite[Proposition~2.1]{Spin83}. In addition, it follows from
\eqref{e:partialinv} that, for every $(p,q,x)\in\HH^3$ such that
$p+\gamma q=x$ and every $\delta\in\RPP$,
\begin{align}
\frac{P_Vq}{\delta}+P_{V^{\bot}}q\in
A\Big(P_Vp+\frac{P_{V^{\bot}}p}{\delta}
\Big)\:
&\Leftrightarrow\:\: 
\frac{\gamma P_Vq}{\delta}+\gamma P_{V^{\bot}}q\in
\gamma A\left(P_Vp+\frac{P_{V^{\bot}}p}{\delta}
\right)\nonumber\\
&\Leftrightarrow\:\: 
\frac{\gamma P_Vq}{\delta}+\frac{P_{V^{\bot}}p}{\delta}\in
\mathcal{A}_{\gamma}\left(P_Vp+\gamma P_{V^{\bot}}q
\right)\nonumber\\
&\Leftrightarrow\:\:
\gamma P_Vq+P_{V^{\bot}}p\in
\delta\mathcal{A}_{\gamma}\left(P_Vp+\gamma P_{V^{\bot}}q
\right)\nonumber\\
&\Leftrightarrow\:\:
P_Vp+\gamma P_{V^{\bot}}q=J_{\delta\mathcal{A}_{\gamma}}(p+\gamma
q)\nonumber\\
&\Leftrightarrow\:\:
P_Vp+\gamma P_{V^{\bot}}q=J_{\delta\mathcal{A}_{\gamma}}x.
\end{align}
In particular, if $\delta=1$, \eqref{e:Spininc} reduces to
$p=J_{\gamma A}(p+\gamma q)=J_{\gamma A}x$ and, hence,
\begin{align}
J_{\mathcal{A}_{\gamma}}x&=P_V(J_{\gamma A}x)+P_{V^{\bot}}
(x-J_{\gamma A}x)\nonumber\\
&=2P_VJ_{\gamma A}x-J_{\gamma A}x+x-P_Vx\nonumber\\
&=\frac12\left(x+2P_V(2J_{\gamma A}x-x)-2J_{\gamma
A}x+x\right)\nonumber\\
&=\frac12\left(x+R_{N_V}R_{\gamma A}x\right).
\end{align}
\ref{p:1ii}: Let $(x,y)\in\HH^2$. We have from
\eqref{e:defmaxmon2}, the monotonicity of $B$, the fact that $P_V$ is linear,
and $P_V^*=P_V$ that
$\scal{x-y}{\mathcal{B}_{\gamma}x-\mathcal{B}_{\gamma}y}=\gamma\scal{
P_Vx-P_V y}{B(P_Vx)-B(P_Vy)}\geq 0$,
and, from the lipschitzian property on $B$ and \eqref{e:defmaxmon2} we obtain
$
\|\mathcal{B}_{\gamma}x-\mathcal{B}_{\gamma}
y\|\leq
\gamma\|B(P_Vx)-B(P_Vy)\|
\leq\gamma\chi\|P_Vx-P_Vy\|\leq\gamma\chi\|x-
y\|$.
\ref{p:1iii}:
Let $x\in\HH$ be a solution to Problem~\ref{prob:1}. 
We have $x\in V$ and there exists $y\in V^{\bot}=N_Vx$ such that 
$y\in Ax+Bx$. 
Since $B$ is single valued and $P_V$ is linear, it follows from
\eqref{e:partialinv} that
\begin{align}
\label{e:carac1}
y\in Ax+Bx\quad
&\Leftrightarrow\quad
\gamma y-\gamma Bx\in \gamma Ax\nonumber\\
&\Leftrightarrow\quad
-\gamma P_V(Bx)
\in (\gamma A)_V\big(x+\gamma (y-P_{V^{\bot}}Bx)\big)\nonumber\\
&\Leftrightarrow\quad
0\in (\gamma A)_V(x+\gamma (y-P_{V^{\bot}}Bx))+\gamma
P_V\big(B\big(P_V(x+\gamma (y-P_{V^{\bot}}Bx))\big)\big)\nonumber\\
&\Leftrightarrow\quad
x+\gamma (y-P_{V^{\bot}}Bx)\in\zer
(\mathcal{A}_{\gamma}+\mathcal{B}_{\gamma}),
\end{align}
which yields the result.
\end{proof}
\begin{remark}
Note that the characterization in Proposition~\ref{p:1}\ref{p:1iii}
yields
$
Z=P_V\big(\zer(\mathcal{A}_{\gamma}+\mathcal{B}_{\gamma})\big).
$
\end{remark}

\subsection{Algorithm and convergence}

In the following result we propose our algorithm and we prove its convergence to a solution to Problem~\ref{prob:1}. Since Proposition~\ref{p:1} asserts that Problem~\ref{prob:1} can be solved via a monotone inclusion involving a maximally monotone operator and
a single-valued lipschitzian monotone operator, our method
is obtained as a consequence of Proposition~\ref{p:tsengrel}, which is inspired from \cite{Siopt3,Tsen00}. 

\begin{theorem}
\label{t:1}
Let $\HH$, $V$, $A$, and $B$, be as in Problem~\ref{prob:1},
let $\gamma\in\RPP$, let
$\varepsilon\in\big]0,\max\{1,\frac{1}{2\gamma\chi}\}\big[$,
let $(\delta_n)_{n\in\NN}$ be a sequence in
$\big[\varepsilon,\frac{1}{\gamma\chi}-\varepsilon\big]$, and let
$(\lambda_n)_{n\in\NN}$ be a sequence in $\left[\varepsilon,1\right]$.
Moreover, 
let $x_0\in V$, let $y_0\in V^{\bot}$, and iterate, for every $n\in\NN$, 
\begin{align}
\label{e:auxprop}
1.\:&
\text{\rm find }(p_n,q_n)\in\HH^2\:\text{\rm such that }
x_n-\delta_n\gamma P_VBx_n+\gamma y_n=p_n+\gamma
q_n\nonumber\\
&\text{\rm and } \frac{P_Vq_n}{\delta_n}+P_{V^{\bot}}q_n\in
A\Big(P_Vp_n+\frac{P_{V^{\bot}}p_n}{\delta_n}\Big).\\
2.\:&\text{\rm set }
x_{n+1}
=x_n+\lambda_n(P_Vp_n+\delta_n\gamma
P_V(Bx_n-BP_Vp_n)-x_n)\:\:\nonumber\\
&\text{\rm and }
y_{n+1}=y_n+\lambda_n(P_{V^{\bot}}q_n-y_n).\: \text{\rm Go to
}1.\nonumber
\end{align}
Then, the sequences $(x_n)_{n\in\NN}$ and $(y_n)_{n\in\NN}$ are in 
$V$ and $V^{\bot}$, respectively, $x_n\weakly \overline{x}$ and $y_n\weakly
\overline{y}$ for some solution $\overline{x}\in\zer(A+B+N_V)$ and $\overline{y}\in
V^{\bot}\cap(A\overline{x}+P_VB\overline{x})$, $x_{n+1}-x_n\to 0\,$, and $\,y_{n+1}-y_n\to 0$.
\end{theorem}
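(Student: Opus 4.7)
The plan is to recognize Algorithm~\eqref{e:auxprop} as the relaxed Tseng iteration~\eqref{e:tsengrel} applied to the reformulated inclusion $0\in\mathcal{A}_\gamma z+\mathcal{B}_\gamma z$ of Proposition~\ref{p:1}, under the change of variable $z_n:=x_n+\gamma y_n$. Since $x_0\in V$ and $y_0\in V^{\bot}$, and since the updates in Step~2 only combine points of $V$ (respectively $V^{\bot}$) with other points of $V$ (respectively $V^{\bot}$), an immediate induction yields $x_n\in V$ and $y_n\in V^{\bot}$ for all $n\in\NN$, so that $P_Vz_n=x_n$ and $P_{V^{\bot}}z_n=\gamma y_n$. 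The hypotheses of Proposition~\ref{p:tsengrel} are then met: by Proposition~\ref{p:1}\ref{p:1i}--\ref{p:1ii}, $\mathcal{A}_\gamma$ is maximally monotone and $\mathcal{B}_\gamma$ is monotone and $\gamma\chi$--lipschitzian, which explains the step-size window $[\varepsilon,(\gamma\chi)^{-1}-\varepsilon]$ stated in the theorem; moreover, Proposition~\ref{p:1}\ref{p:1iii} combined with the standing assumption $\zer(A+B+N_V)\neq\varnothing$ furnishes $\zer(\mathcal{A}_\gamma+\mathcal{B}_\gamma)\neq\varnothing$.

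The algebraic verification is the core of the proof. First, $\mathcal{B}_\gamma z_n=\gamma P_V B(P_Vz_n)=\gamma P_V Bx_n$, so
\begin{equation*}
r_n:=z_n-\delta_n\mathcal{B}_\gamma z_n=x_n-\delta_n\gamma P_VBx_n+\gamma y_n,
\end{equation*}
which is precisely the quantity decomposed as $p_n+\gamma q_n$ in Step~1. By Proposition~\ref{p:1}\ref{p:1i}, the inclusion prescribed in Step~1 is equivalent to $s_n:=J_{\delta_n\mathcal{A}_\gamma}r_n=P_Vp_n+\gamma P_{V^{\bot}}q_n$. Because $P_Vs_n=P_Vp_n$, one obtains $\mathcal{B}_\gamma s_n=\gamma P_V BP_Vp_n$ and hence
\begin{equation*}
t_n-r_n=\bigl(P_Vp_n+\delta_n\gamma P_V(Bx_n-BP_Vp_n)-x_n\bigr)+\gamma\bigl(P_{V^{\bot}}q_n-y_n\bigr).
\end{equation*}
Splitting the relaxed update $z_{n+1}=z_n+\lambda_n(t_n-r_n)$ onto $V$ and $V^{\bot}$ then recovers exactly the two formulas of Step~2, so $z_{n+1}=x_{n+1}+\gamma y_{n+1}$ and the induction closes.

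It remains to translate the conclusions of Proposition~\ref{p:tsengrel} back to $(x_n,y_n)$. That proposition yields $z_n\weakly\bar{z}\in\zer(\mathcal{A}_\gamma+\mathcal{B}_\gamma)$ and $z_{n+1}-z_n\to0$. Applying the continuous linear projectors $P_V$ and $P_{V^{\bot}}$ gives $x_n\weakly\bar{x}:=P_V\bar{z}$, $y_n\weakly P_{V^{\bot}}\bar{z}/\gamma$, $x_{n+1}-x_n\to0$, and $y_{n+1}-y_n\to0$. By Proposition~\ref{p:1}\ref{p:1iii}, there exists $y\in V^{\bot}\cap(A\bar{x}+B\bar{x})$ with $\bar{z}=\bar{x}+\gamma(y-P_{V^{\bot}}B\bar{x})$, hence $\bar{y}:=P_{V^{\bot}}\bar{z}/\gamma=y-P_{V^{\bot}}B\bar{x}\in V^{\bot}$; a direct rearrangement gives $\bar{y}-P_VB\bar{x}=y-B\bar{x}\in A\bar{x}$, i.e., $\bar{y}\in V^{\bot}\cap(A\bar{x}+P_VB\bar{x})$, as claimed. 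The only delicate point in the whole argument is the careful algebraic matching in Step~2; once that is carried out, everything else is a direct consequence of Propositions~\ref{p:tsengrel} and~\ref{p:1}.
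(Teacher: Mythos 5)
Your proposal is correct and follows essentially the same route as the paper: the change of variable $z_n=x_n+\gamma y_n$, the identification of Step~1 with $s_n=J_{\delta_n\mathcal{A}_\gamma}(z_n-\delta_n\mathcal{B}_\gamma z_n)$ via Proposition~\ref{p:1}\ref{p:1i}, the reduction to the relaxed Tseng iteration of Proposition~\ref{p:tsengrel}, and the translation back through Proposition~\ref{p:1}\ref{p:1iii}. Your final identification of $\overline{y}\in V^{\bot}\cap(A\overline{x}+P_VB\overline{x})$ is in fact spelled out slightly more explicitly than in the paper's own proof.
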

\begin{proof}
Since $x_0\in V$ and $y_0\in V^{\bot}$, \eqref{e:auxprop}
yields $(x_n)_{n\in\NN}\subset V$ and $(y_n)_{n\in\NN}\subset
V^{\bot}$.
Thus, for every $n\in\NN$, it follows from \eqref{e:auxprop} and
Proposition~\ref{p:1}\ref{p:1i} that
\begin{equation}
\label{e:aux12}
P_Vp_n+\gamma P_{V^{\bot}}q_n=J_{
\delta_n(\gamma A)_V}(x_n+\gamma
y_n-\delta_n\gamma P_VBx_n).
\end{equation}
For every $n\in\NN$, denote by $z_n=x_n+\gamma y_n$ and by
\begin{equation}
s_n=J_{\delta_n(\gamma A)_V}(x_n+\gamma
y_n-\delta_n\gamma P_VBx_n)=J_{\delta_n(\gamma
A)_V}(z_n-\delta_n\gamma
P_VBP_Vz_n)
=J_{\delta_n\mathcal{A}_{\gamma}}(z_n-\delta_n\mathcal{B}_{
\gamma}z_n).
\end{equation}
Hence, it follows from \eqref{e:aux12} that $P_Vp_n=P_Vs_n$, $\gamma P_{V^{\bot}}q_n=P_{V^{\bot}}s_n$,
and, from \eqref{e:auxprop}, we obtain
\begin{equation}
\begin{cases}
x_{n+1}=x_n+\lambda_n (P_Vs_n+\delta_n\gamma P_V(Bx_n-BP_Vs_n)-x_n)\\
\gamma y_{n+1}=\gamma y_n+\lambda_n (
P_{V^{\bot}}s_n-\gamma y_n).
\end{cases}
\end{equation}
By adding the latter equations we deduce that
the algorithm described in \eqref{e:auxprop} can be written
equivalently as
\begin{align}
\label{e:fbfred}
(\forall n\in\NN)\quad 
&\left 
\lfloor 
\begin{array}{l}
r_n=z_n-\delta_n\mathcal{B}_{\gamma}z_n\\
s_n=J_{\delta_n\mathcal{A}_{\gamma}}r_n\\
t_n=s_n-\delta_n\mathcal{B}_{\gamma}s_n\\
z_{n+1}=z_n+ \lambda_n
(t_n-r_n),
\end{array}
\right. 
\end{align}
which is a particular instance of \eqref{e:tsengrel} when $\mathcal{B}=\mathcal{B}_{\gamma}$ and $\mathcal{A}=\mathcal{A}_{\gamma}$.
Therefore, it follows from
Proposition~\ref{p:1}\ref{p:1i}\&\ref{p:1ii} and Proposition~\ref{p:tsengrel} that $z_n\weakly \overline{z}\in\zer(\mathcal{A}_{\gamma}+\mathcal{B}_{\gamma})$ and $z_{n+1}-z_n\to0$. By defining  $\overline{x}:=P_V\overline{z}\in Z$ and
$\overline{y}:=P_{V^{\bot}}\overline{z}/\gamma\in
(A\overline{x}+B\overline{x})-P_{V^{\bot}}B\overline{x}=A\overline{x}
+P_VB\overline{x}$, the results follow from Proposition~\ref{p:1}\ref{p:1iii} and Proposition~\ref{p:tsengrel}.
\end{proof}
\newpage

\begin{remark}\
\begin{enumerate}
\item 
It is known that the forward--backward--forward
splitting admits errors in the computations of the
operators involved \cite{Nfao1,Siopt3}. In our algorithm these
inexactitudes
have not been considered for simplicity.
\item 
In the particular case when $\lambda_n\equiv1$ and $B\equiv0$
($\chi=0$), \eqref{e:auxprop} reduces to the classical partial
inverse method proposed in \cite{Spin83} for finding 
$x\in V$ such that there exists $y\in V^{\bot}$ satisfying $y\in Ax$.
\item As in \cite{Siopt3}, under further assumptions on the operators $\mathcal{A}_{\gamma}$ and/or $\mathcal{B}_{\gamma}$, e.g., as demiregularity (see \cite[Definition~2.3\&Proposition~2.4]{Sicon1}), strong convergence can be achieved.
\end{enumerate}
\end{remark}

The sequence $(\delta_n)_{n\in\NN}$ in Theorem~\ref{t:1} can be
manipulated 
in order to accelerate the algorithm. However, as in \cite{Spin83}, 
{\em Step} 1 in Theorem~\ref{t:1} is not always easy to compute. 
The following result show us a particular case of our 
method in which {\em Step} 1 can be obtained explicitly 
when the resolvent of $A$ is computable. The method can be seen as a
forward-Douglas-Rachford-forward splitting for solving
Problem~\ref{p:1}.

\begin{corollary}
\label{c:0}
Let $\HH$, $V$, $A$, and $B$, be as in Problem~\ref{prob:1},
let $\gamma\in\left]0,1/\chi\right[$, let
$\varepsilon\in\left]0,1\right[$, and let
$(\lambda_n)_{n\in\NN}$ be a sequence in $\left[\varepsilon,1\right]$. Moreover,
let $z_0\in\HH$, and iterate, for every $n\in\NN$,
\begin{align}
\label{e:algo}
&\left 
\lfloor 
\begin{array}{l}
r_n=z_n-\gamma P_V BP_Vz_n\\
p_n=J_{\gamma A}r_n\\
s_n=2P_Vp_n-p_n+r_n-P_Vr_n\\
t_n=s_n-\gamma P_V BP_Vs_n\\
z_{n+1}=z_n+\lambda_n (t_n-r_n).
\end{array}
\right.
\end{align}
Then, by setting, for every $n\in\NN$, $x_n=P_Vz_n$ and $y_n=P_{V^{\bot}}z_n/\gamma$, we have $x_n\weakly\bar{x}$ and $y_n\weakly\bar{y}$ for some $\overline{x}\in\zer(A+B+N_V)$ and $\overline{y}\in
V^{\bot}\cap(A\overline{x}+P_VB\overline{x})$, $x_{n+1}-x_n\to 0$, and $y_{n+1}-y_n\to 0$. 
\end{corollary}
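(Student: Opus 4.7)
The plan is to recognize the iteration \eqref{e:algo} as the specialization of the inner scheme \eqref{e:fbfred} appearing in the proof of Theorem~\ref{t:1}, obtained by fixing $\delta_n\equiv1$, applied to the operators $\mathcal{A}_{\gamma}$ and $\mathcal{B}_{\gamma}$ of Proposition~\ref{p:1}, and then to invoke Proposition~\ref{p:tsengrel}.

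First I would verify that the hypotheses of Proposition~\ref{p:tsengrel} are satisfied with $\mathcal{A}=\mathcal{A}_{\gamma}$, $\mathcal{B}=\mathcal{B}_{\gamma}$, $\eta=\gamma\chi$, and the constant sequence $\delta_n\equiv1$. Maximal monotonicity of $\mathcal{A}_{\gamma}$ and the monotonicity/$\gamma\chi$--Lipschitz property of $\mathcal{B}_{\gamma}$ are provided by Proposition~\ref{p:1}\ref{p:1i}\&\ref{p:1ii}. Since $\gamma\chi<1$, one can pick $\varepsilon'\in\left]0,\min\{\varepsilon,1-\gamma\chi\}\right[$ so that $1\in[\varepsilon',(1/(\gamma\chi))-\varepsilon']$ and $(\lambda_n)_{n\in\NN}\subset[\varepsilon',1]$, placing us inside the range of admissible parameters. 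Finally, the standing assumption $\zer(A+B+N_V)\neq\varnothing$ together with Proposition~\ref{p:1}\ref{p:1iii} ensures $\zer(\mathcal{A}_{\gamma}+\mathcal{B}_{\gamma})\neq\varnothing$.

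Next I would match \eqref{e:algo} line by line with \eqref{e:fbfred}. Writing $x_n=P_Vz_n$, the identity $\mathcal{B}_{\gamma}z_n=\gamma P_VBP_Vz_n$ shows $r_n=z_n-\mathcal{B}_{\gamma}z_n$. For the resolvent step, the key computation is the explicit formula in Proposition~\ref{p:1}\ref{p:1i} with $\delta=1$, namely $J_{\mathcal{A}_{\gamma}}=2P_VJ_{\gamma A}-J_{\gamma A}+\Id-P_V$, which, evaluated at $r_n$ with $p_n=J_{\gamma A}r_n$, yields exactly
\[
J_{\mathcal{A}_{\gamma}}r_n=2P_Vp_n-p_n+r_n-P_Vr_n=s_n.
\]
The remaining two lines ($t_n=s_n-\mathcal{B}_{\gamma}s_n$ and $z_{n+1}=z_n+\lambda_n(t_n-r_n)$) agree by inspection.

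Having identified the schemes, Proposition~\ref{p:tsengrel} delivers $z_n\weakly\overline{z}$ for some $\overline{z}\in\zer(\mathcal{A}_{\gamma}+\mathcal{B}_{\gamma})$ and $z_{n+1}-z_n\to 0$. Setting $\overline{x}:=P_V\overline{z}$ and $\overline{y}:=P_{V^{\bot}}\overline{z}/\gamma$, Proposition~\ref{p:1}\ref{p:1iii} (together with the argument in the end of the proof of Theorem~\ref{t:1}) gives $\overline{x}\in\zer(A+B+N_V)$ and $\overline{y}\in V^{\bot}\cap(A\overline{x}+P_VB\overline{x})$. The weak (respectively strong) continuity of the linear projections $P_V$ and $P_{V^{\bot}}$ then yields $x_n=P_Vz_n\weakly\overline{x}$, $y_n=P_{V^{\bot}}z_n/\gamma\weakly\overline{y}$, $x_{n+1}-x_n=P_V(z_{n+1}-z_n)\to0$, and $y_{n+1}-y_n=P_{V^{\bot}}(z_{n+1}-z_n)/\gamma\to0$. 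The only delicate step is the application of Proposition~\ref{p:1}\ref{p:1i} that collapses $J_{\mathcal{A}_{\gamma}}r_n$ into the Douglas--Rachford--like expression $2P_Vp_n-p_n+r_n-P_Vr_n$; everything else is hypothesis checking or direct substitution.
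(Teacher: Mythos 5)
Your proposal is correct and follows essentially the same route as the paper: it identifies \eqref{e:algo} as the scheme \eqref{e:fbfred} with $\delta_n\equiv 1$ via the explicit resolvent formula $J_{\mathcal{A}_{\gamma}}=2P_VJ_{\gamma A}-J_{\gamma A}+\Id-P_V$ of Proposition~\ref{p:1}\ref{p:1i}, and then invokes the relaxed Tseng result together with Proposition~\ref{p:1}\ref{p:1iii}. The only cosmetic difference is that you apply Proposition~\ref{p:tsengrel} directly (re-checking its hypotheses and the identification of $\overline{x},\overline{y}$), whereas the paper simply delegates these steps to Theorem~\ref{t:1}, whose proof contains exactly the same argument.
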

\begin{proof}
Indeed, it follows from the proof of Theorem~\ref{t:1} that
\eqref{e:auxprop} is equivalent to \eqref{e:fbfred}, where, for every $n\in\NN$, $z_n=x_n+\gamma y_n$. In the particular case when
$\delta_n\equiv1\in\left]0,1/(\gamma\chi)\right[$, it follows from 
Proposition~\ref{p:1}\ref{p:1i} that \eqref{e:fbfred} reduces to
\eqref{e:algo}. Hence, the results follow from Theorem~\ref{t:1}.
\end{proof}

\begin{remark}\
\begin{enumerate}
 \item 
Note that, when $V=\HH$ and $\lambda_n\equiv1$, we have
$V^{\bot}=\{0\}$, $P_V=\Id$, $(\Id+R_{N_V}R_{\gamma A})/2=J_{\gamma
A}$, and, therefore, \eqref{e:algo} reduces
to
\begin{equation}
(\forall n\in\NN)\quad
\left 
\lfloor 
\begin{array}{l}
r_n=x_n-\gamma Bx_n\\
s_n=J_{\gamma A}r_n\\
t_n=s_n-\gamma Bs_n\\
x_{n+1}=x_n+t_n-r_n,
\end{array}
\right.
\end{equation}
which is a version with constant step size of the modified forward-backward splitting \cite{Tsen00} for finding a zero of $A+B$. 
\item On the other hand, when $B\equiv 0$,  \eqref{e:algo} reduces to 
\begin{equation}
(\forall n\in\NN)\quad
\left 
\lfloor 
\begin{array}{l}
s_n=(z_n+R_{N_V}R_{\gamma A}z_n)/2\\
z_{n+1}=z_n+\lambda_n(s_n-z_n),
\end{array}
\right.
\end{equation}
which is the Douglas-Rachford splitting method \cite{Lion79,Svai10} for finding
$x\in\HH$ such that $x\in N_Vx+Ax$. It coincides with Spingarn's
partial inverse method with constant step size.
\end{enumerate}

\end{remark}
\section{Applications}
\label{sec:4}
In this section we study three applications of our algorithm. We first apply Theorem~\ref{t:1} to the problem of finding a zero of the sum of $m$ maximally
monotone operators and a monotone lipschitzian operator. Secondly, we study a primal-dual composite monotone inclusion
involving normal cones and we obtain from Theorem~\ref{t:1}
a primal-dual method for solving this problem. Finally, we study the application of our method in the framework of continuous zero-sum games.
Connections with other methods in
each framework are also provided. 
\subsection{Inclusion Involving the Sum of $m$ Monotone Operators}
\label{ssec:41}
Let us consider the following problem.
\begin{problem}
\label{prob:Fad}
Let $(\mathsf{H},|\cdot|)$ be a real Hilbert space, 
for every $i\in\{1,\ldots,m\}$, 
let $\mathsf{A}_i\colon\mathsf{H}\to 2^{\mathsf{H}}$
be a maximally monotone operator, and
let $\mathsf{B}\colon\mathsf{H}\to\mathsf{H}$ be a
monotone and $\chi$--lipschitzian operator.
The problem is to
\begin{equation}
\text{find}\quad \mathsf{x}\in\mathsf{H}\quad\text{such that}
\quad \mathsf{0}\in\sum_{i=1}^m\mathsf{A}_i\mathsf{x}
+\mathsf{B}\mathsf{x},
\end{equation}
under the assumption that solutions exist.
\end{problem}

Problem~\ref{prob:Fad} has several applications in image processing, 
principally in the variational setting (see, e.g.,
\cite{Invp08,Fadi12} and the references therein), variational
inequalities \cite{Tsen90,Tsen91}, partial differential
equations \cite{Merc80,Zeid90}, and economics \cite{Jofr07,Penn12}, among
others. In \cite{Fadi12,Bang12}, Problem~\ref{prob:Fad} is solved by a fully split algorithm in the particular case when $\mathsf{B}$ is cocoercive. Nevertheless, this approach does not seem to work in the general case. In \cite{Comb12} a method for solving a more general problem than Problem~\ref{prob:Fad} is proposed. However, this approach stores and updates at each iteration $m$ dual variables in order to solve \eqref{prob:Fad} and its dual simultaneously. This generality does not allow to exploit the intrinsic properties of
Problem~\ref{prob:Fad}, which may be unfavourable in large scale systems. Our method is obtained as a
consequence of Theorem~\ref{t:1} for a suitable closed vectorial subspace and exploits the whole structure of the problem.

Let us first provide a connection between
Problem~\ref{prob:Fad} and Problem~\ref{prob:1} via product space techniques.
Let $(\omega_i)_{1\leq i\leq m}$ be real numbers in $\zeroun$ such that
$\sum_{i=1}^m\omega_i=1$, let $\HH$ be the real 
Hilbert space obtained by endowing 
the Cartesian product $\mathsf{H}^m$ with the scalar product and
associated norm respectively defined by 
\begin{equation}
\label{e:prodscal}
\scal{\cdot}{\cdot}\colon (x,y)\mapsto
\sum_{i=1}^m\omega_i\pscal{\mathsf{x}_i}{\mathsf{y}_i}\quad\text{and}
\quad
\|\cdot\|\colon
x\mapsto\sqrt{\sum_{i=1}^m\omega_i\mathsf{|}\mathsf{x}_i\mathsf{|}^2},
\end{equation}
where $x=(\mathsf{x}_i)_{1\leq i\leq m}$ is a generic element of $\HH$.
\begin{proposition}
\label{p:app1}
Let $\mathsf{H}$, $(\mathsf{A}_i)_{1\leq i\leq m}$, and $\mathsf{B}$ be as in 
Problem~\ref{prob:Fad}, and define
\begin{equation}
\label{e:defprodc}
\begin{cases}
V=\menge{x=(\mathsf{x}_i)_{1\leq i\leq m}\in\HH}{\mathsf{x}_1=\cdots=\mathsf{x}_m}\\
j\colon\mathsf{H}\to V\subset\HH\colon\mathsf{x}\mapsto(\mathsf{x},\ldots,\mathsf{x})\\
A\colon\HH\to 2^{\HH}\colon x\mapsto \frac{1}{\omega_1}\mathsf{A}_1\mathsf{x}_1\times\cdots\times
\frac{1}{\omega_m}\mathsf{A}_m\mathsf{x}_m\\
B\colon\HH\to\HH\colon x\mapsto(\mathsf{B}\mathsf{x}_1,\ldots,\mathsf{B}\mathsf{x}_m).
\end{cases}
\end{equation}
Then the following hold.
\begin{enumerate}
\item\label{p:app1i} $V$ is a closed vector subspace of $\HH$, 
$P_V\colon (\mathsf{x}_i)_{1\leq
i\leq m}\mapsto j(\sum_{i=1}^m\omega_i\mathsf{x}_i)$, and 
\begin{equation}
N_V\colon x\mapsto
\begin{cases}
V^{\bot}=\menge{x=(\mathsf{x}_i)_{1\leq i\leq m}\in\HH}{\sum_{i=1}^m\omega_i\mathsf{x}_i=\mathsf{0}},
\quad&\text{if}\:\:x\in V;\\
\emp,&\text{otherwise}.
\end{cases}
\end{equation}
\item\label{p:app1ii} $j\colon\mathsf{H}\to V$ is a bijective isometry and $j^{-1}\colon(\mathsf{x},\ldots,\mathsf{x})
\mapsto \mathsf{x}$.
\item\label{p:app1iii} $A$ is a maximally monotone operator and, for
every $\gamma\in\RPP$, 
$J_{\gamma A}\colon
(\mathsf{x}_i)_{1\leq
i\leq m}\mapsto(J_{\gamma\mathsf{A}_i/\omega_i}\mathsf{x}_i)$.
\item\label{p:app1iv} $B$ is monotone and $\chi$--lipschitzian, 
$B(j(\mathsf{x}))=j(\mathsf{B}\mathsf{x})$, and $B(V)\subset V$.
\item\label{p:app1v} For every $\mathsf{x}\in\mathsf{H}$, 
$\mathsf{x}$ is a solution to Problem~\ref{prob:Fad}
if and only if $j(\mathsf{x})\in\zer(A+B+N_V)$.
\end{enumerate}
\end{proposition}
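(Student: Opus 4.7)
The proof will verify each of the five items by direct calculation in the weighted product space. The key observation to keep in mind throughout is that the weights $\omega_i$ enter in two places: inside the scalar product of $\HH$, and inside the definition of $A$ (through the factor $1/\omega_i$). These two occurrences are designed to cancel, and making this cancellation visible is what drives the whole proof.

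For \ref{p:app1i}, $V$ is a closed linear subspace since it is the kernel of the bounded linear map $(\mathsf{x}_i)\mapsto(\mathsf{x}_1-\mathsf{x}_2,\dots,\mathsf{x}_1-\mathsf{x}_m)$. To identify $P_V$, I minimize $\mathsf{y}\mapsto\sum_i\omega_i|\mathsf{x}_i-\mathsf{y}|^2$ on $\mathsf{H}$; setting the gradient to zero and using $\sum_i\omega_i=1$ yields $\mathsf{y}=\sum_i\omega_i\mathsf{x}_i$. The formula for $N_V$ then follows from \eqref{e:normalcone} together with the fact that for a closed subspace $N_V$ equals $V^{\bot}$ on $V$ and is empty elsewhere; the explicit description of $V^{\bot}$ comes from imposing $\scal{y}{j(\mathsf{z})}=\sum_i\omega_i\pscal{\mathsf{y}_i}{\mathsf{z}}=\pscal{\sum_i\omega_i\mathsf{y}_i}{\mathsf{z}}=0$ for all $\mathsf{z}\in\mathsf{H}$. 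For \ref{p:app1ii}, linearity and surjectivity of $j$ onto $V$ are immediate, and $\|j(\mathsf{x})\|^2=\sum_i\omega_i|\mathsf{x}|^2=|\mathsf{x}|^2$ gives the isometry.

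For \ref{p:app1iii}, if $(x,u)$ and $(y,v)$ lie in $\gr A$ then $\omega_i\mathsf{u}_i\in\mathsf{A}_i\mathsf{x}_i$ and $\omega_i\mathsf{v}_i\in\mathsf{A}_i\mathsf{y}_i$, so
\begin{equation*}
\scal{x-y}{u-v}=\sum_{i=1}^m\omega_i\pscal{\mathsf{x}_i-\mathsf{y}_i}{\mathsf{u}_i-\mathsf{v}_i}=\sum_{i=1}^m\pscal{\mathsf{x}_i-\mathsf{y}_i}{\omega_i\mathsf{u}_i-\omega_i\mathsf{v}_i}\geq 0
\end{equation*}
by monotonicity of each $\mathsf{A}_i$. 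Maximality follows because $\ran(\Id+A)=\HH$ reduces coordinatewise to $\ran(\Id+\mathsf{A}_i/\omega_i)=\mathsf{H}$, which holds since each $\mathsf{A}_i/\omega_i$ is maximally monotone. The resolvent formula is obtained by writing $u=J_{\gamma A}x$ coordinatewise. For \ref{p:app1iv}, the monotonicity and Lipschitz estimates on $B$ follow from the componentwise identities and the Lipschitz constant being preserved under the weighted sum $\sum_i\omega_i=1$; the invariance $B(j(\mathsf{x}))=j(\mathsf{B}\mathsf{x})$ is immediate from the definitions.

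Part \ref{p:app1v} is the most delicate and is the main obstacle. I will unfold $0\in A(j(\mathsf{x}))+B(j(\mathsf{x}))+N_V(j(\mathsf{x}))$: this means there exist $(\mathsf{v}_i)\in\HH$ with $\omega_i\mathsf{v}_i\in\mathsf{A}_i\mathsf{x}$ for every $i$, and $(\mathsf{w}_i)\in V^{\bot}$, satisfying $\mathsf{v}_i+\mathsf{B}\mathsf{x}+\mathsf{w}_i=\mathsf{0}$ coordinate-by-coordinate. Multiplying by $\omega_i$ and summing over $i$, the vectors $\mathsf{w}_i$ disappear because $\sum_i\omega_i\mathsf{w}_i=\mathsf{0}$, yielding $\sum_i\omega_i\mathsf{v}_i+\mathsf{B}\mathsf{x}=\mathsf{0}$ with $\omega_i\mathsf{v}_i\in\mathsf{A}_i\mathsf{x}$; this is exactly $\mathsf{0}\in\sum_i\mathsf{A}_i\mathsf{x}+\mathsf{B}\mathsf{x}$. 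Conversely, given $\mathsf{u}_i\in\mathsf{A}_i\mathsf{x}$ with $\sum_i\mathsf{u}_i+\mathsf{B}\mathsf{x}=\mathsf{0}$, set $\mathsf{v}_i=\mathsf{u}_i/\omega_i$ and $\mathsf{w}_i=-\mathsf{v}_i-\mathsf{B}\mathsf{x}$, and check $\sum_i\omega_i\mathsf{w}_i=-\sum_i\mathsf{u}_i-\mathsf{B}\mathsf{x}=\mathsf{0}$. The hard part is keeping track of where the weights $\omega_i$ appear; the choice $A=\bigtimes_i\mathsf{A}_i/\omega_i$ in \eqref{e:defprodc} is precisely what compensates the weighted scalar product so that summing with weights produces the unweighted sum $\sum_i\mathsf{A}_i\mathsf{x}$ appearing in Problem~\ref{prob:Fad}.
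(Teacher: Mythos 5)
Your proposal is correct and follows essentially the same route as the paper: items \ref{p:app1i}--\ref{p:app1iv} are the direct computations the paper dismisses as ``easy'' or delegates to \cite[Proposition~23.16]{Livre1}, and your two-directional argument for \ref{p:app1v} is exactly the paper's chain of equivalences (introducing $\mathsf{y}_i\in\mathsf{A}_i\mathsf{x}$, rescaling by $1/\omega_i$, and absorbing the residual into $V^{\bot}$) unfolded into its two implications. Nothing is missing.
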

\begin{proof}
\ref{p:app1i}\&\ref{p:app1ii}: They follow from \eqref{e:normalcone}
and easy computations.
\ref{p:app1iii}: See \cite[Proposition~23.16]{Livre1}.
\ref{p:app1iv}: They follow from straightforward computations by using \eqref{e:defprodc}, \eqref{e:prodscal}, and the properties on $\mathsf{B}$.  
\ref{p:app1v}: Let $\mathsf{x}\in\mathsf{H}$. We have
\begin{align}
\mathsf{0}\in\sum_{i=1}^m\mathsf{A}_i\mathsf{x}+\mathsf{B}\mathsf{x}
\quad
&\Leftrightarrow\quad\bigg(\exi(\mathsf{y}_i)_{1\leq i\leq m}\in
\overset{m}{\underset{i=1}{\cart}}\mathsf{A}_i\mathsf{x}\bigg)
\quad
\mathsf{0}=\sum_{i=1}^m\mathsf{y}_i+\mathsf{B}\mathsf{x}\nonumber\\
&\Leftrightarrow\quad\bigg(\exi(\mathsf{y}_i)_{1\leq i\leq m}\in
\overset{m}{\underset{i=1}{\cart}}\mathsf{A}_i\mathsf{x}\bigg)
\quad 
\mathsf{0}=\sum_{i=1}^m\omega_i(-\mathsf{y}_i/\omega_i-\mathsf{B}
\mathsf{x})\nonumber\\
&\Leftrightarrow\quad\bigg(\exi(\mathsf{y}_i)_{1\leq i\leq m}\in
\overset{m}{\underset{i=1}{\cart}}\mathsf{A}_i\mathsf{x}\bigg)-(\mathsf{y}_1/\omega_1,\ldots,\mathsf{y}_m/\omega_m)-j(\mathsf{B}
\mathsf{x})\in
V^{\bot}\nonumber\\
&\Leftrightarrow\quad
0\in A(j(\mathsf{x}))+B(j(\mathsf{x}))+N_V(j(\mathsf{x}))\nonumber\\
&\Leftrightarrow\quad j(\mathsf{x})\in\zer(A+B+N_V),
\end{align}
which yields the result.
\end{proof}

The following result provides a method for solving
Problem~\ref{prob:Fad}. It is a direct consequence of
Corollary~\ref{c:0} applied to the equivalent monotone inclusion in Proposition~\ref{p:app1}\ref{p:app1v}. 
\begin{theorem}
\label{t:4}
Let $\mathsf{H}$, $(\mathsf{A}_i)_{1\leq i\leq m}$, and $\mathsf{B}$ be as in 
Problem~\ref{prob:Fad}, let $\gamma\in\left]0,1/\chi\right[$, let
$\varepsilon\in\left]0,1\right[$, and let $(\lambda_n)_{n\in\NN}$ be
a sequence in $\left[\varepsilon,1\right]$. Moreover, let
$(\mathsf{z}_{i,0})_{1\leq i\leq
m}\in\mathsf{H}^m$
and iterate, for every $n\in\NN$,
\begin{align}
\label{e:algo2}
&\left\lfloor 
\begin{array}{l}
\mathsf{x}_{n}=\sum_{j=1}^m\omega_j\mathsf{z}_{j,n}\\
\text{\rm For }i=1,\ldots,m\\
\left\lfloor
\begin{array}{l}
\mathsf{r}_{i,n}=\mathsf{z}_{i,n}-\gamma
\mathsf{B}\mathsf{x}_n\\
\mathsf{p}_{i,n}=J_{\gamma
\mathsf{A}_i/\omega_i}\mathsf{r}_{i,n}\\
\end{array}
\right.\\
\mathsf{q}_{n}=\sum_{j=1}^m\omega_j\mathsf{p}_{j,n}\\
\text{\rm For }i=1,\ldots,m\\
\left\lfloor
\begin{array}{l}
\mathsf{s}_{i,n}=2\mathsf{q}_{n}-\mathsf{p}_{
i,n}+\mathsf{z}_{i,n}-\mathsf{x}_{n}\\
\mathsf{t}_{i,n}=\mathsf{s}_{i,n}-\gamma\mathsf{B}\mathsf{q}_n\\
\mathsf{z}_{i,n+1}=\mathsf{z}_{i,n}+\lambda_n(\mathsf{t}_{i,n}-\mathsf{r}_{i,n}).\\
\end{array}
\right.\\
\end{array}
\right.
\end{align}
Then, $\mathsf{x}_n\weakly\overline{\mathsf{x}}$ for 
some solution $\overline{\mathsf{x}}$ to Problem~\ref{prob:Fad} and $\mathsf{x}_{n+1}-\mathsf{x}_n\to 0$.
\end{theorem}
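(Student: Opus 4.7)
The plan is to invoke Corollary~\ref{c:0} on the product--space reformulation given in Proposition~\ref{p:app1}, and then show that the coordinate decomposition of \eqref{e:algo} is precisely the block iteration \eqref{e:algo2}. First I would equip $\HH=\mathsf{H}^m$ with the weighted inner product \eqref{e:prodscal}, take $V$, $A$, $B$ as in \eqref{e:defprodc}, and appeal to Proposition~\ref{p:app1}\ref{p:app1iii}--\ref{p:app1iv} to verify the hypotheses of Problem~\ref{prob:1}: $A$ is maximally monotone, $B$ is monotone and $\chi$--lipschitzian (the lipschitz constant is preserved by the weighted product norm), and Proposition~\ref{p:app1}\ref{p:app1v} gives $\zer(A+B+N_V)\ne\varnothing$ from the standing assumption on Problem~\ref{prob:Fad}. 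Thus Corollary~\ref{c:0} applies with the chosen $\gamma$ and $(\lambda_n)_{n\in\NN}$, and any starting point $z_0=(\mathsf{z}_{i,0})_{1\le i\le m}\in\HH$ is admissible.

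Next I would translate \eqref{e:algo} coordinatewise. Setting $z_n=(\mathsf{z}_{i,n})_{1\le i\le m}$, the formula for $P_V$ in Proposition~\ref{p:app1}\ref{p:app1i} yields $P_Vz_n=j(\mathsf{x}_n)$ with $\mathsf{x}_n=\sum_j\omega_j\mathsf{z}_{j,n}$; Proposition~\ref{p:app1}\ref{p:app1iv} then gives $BP_Vz_n=j(\mathsf{B}\mathsf{x}_n)$, so $P_VBP_Vz_n=j(\mathsf{B}\mathsf{x}_n)$. Plugging this into $r_n=z_n-\gamma P_VBP_Vz_n$ recovers the first block, and Proposition~\ref{p:app1}\ref{p:app1iii} applied to $p_n=J_{\gamma A}r_n$ gives the coordinatewise resolvents $\mathsf{p}_{i,n}=J_{\gamma\mathsf{A}_i/\omega_i}\mathsf{r}_{i,n}$. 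The third line of \eqref{e:algo} involves $P_Vp_n=j(\mathsf{q}_n)$ with $\mathsf{q}_n=\sum_j\omega_j\mathsf{p}_{j,n}$ and $P_Vr_n=j(\mathsf{x}_n-\gamma\mathsf{B}\mathsf{x}_n)$; the telescoping $2P_Vp_n-p_n+r_n-P_Vr_n$ then simplifies in each coordinate to $2\mathsf{q}_n-\mathsf{p}_{i,n}+\mathsf{z}_{i,n}-\mathsf{x}_n$. A quick direct computation confirms that $P_Vs_n=j(\mathsf{q}_n)$, hence $P_VBP_Vs_n=j(\mathsf{B}\mathsf{q}_n)$, producing the formulas for $\mathsf{t}_{i,n}$ and $\mathsf{z}_{i,n+1}$ in \eqref{e:algo2}.

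Finally I would harvest the convergence. Corollary~\ref{c:0} gives $P_Vz_n\weakly\bar x$ for some $\bar x\in\zer(A+B+N_V)\subset V$. Since $P_Vz_n=j(\mathsf{x}_n)$ and $\bar x=j(\overline{\mathsf{x}})$ for a unique $\overline{\mathsf{x}}\in\mathsf{H}$, and since $j^{-1}$ is a (bijective) isometry by Proposition~\ref{p:app1}\ref{p:app1ii}, the weak convergence transfers to $\mathsf{x}_n\weakly\overline{\mathsf{x}}$ in $\mathsf{H}$; Proposition~\ref{p:app1}\ref{p:app1v} then identifies $\overline{\mathsf{x}}$ as a solution to Problem~\ref{prob:Fad}. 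Likewise, the norm equivalence under $j$ yields $\mathsf{x}_{n+1}-\mathsf{x}_n\to0$ from the corresponding property of $P_Vz_n$, which follows from $z_{n+1}-z_n\to 0$ in Corollary~\ref{c:0} together with the continuity of $P_V$.

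The only delicate point is the algebraic verification that the third line of \eqref{e:algo} collapses to $2\mathsf{q}_n-\mathsf{p}_{i,n}+\mathsf{z}_{i,n}-\mathsf{x}_n$, which requires careful handling of $P_Vr_n$ using $B(V)\subset V$; all other steps are routine once the product structure is fixed. The rest is bookkeeping in the weighted Hilbert space.
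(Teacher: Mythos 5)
Your proposal is correct and follows essentially the same route as the paper: identify $(\mathsf{z}_{i,n})_i$, $(\mathsf{p}_{i,n})_i$, etc.\ with product-space variables, use Proposition~\ref{p:app1} to check that \eqref{e:algo2} is the coordinatewise form of \eqref{e:algo} (in particular $P_Vz_n=j(\mathsf{x}_n)$ and $P_Vp_n=P_Vs_n=j(\mathsf{q}_n)$), and then invoke Corollary~\ref{c:0} together with Proposition~\ref{p:app1}\ref{p:app1v}. Your cancellation $r_{i,n}-(P_Vr_n)_i=z_{i,n}-\mathsf{x}_n$ in the third line is exactly the computation the paper leaves implicit, and your transfer of weak convergence through the isometry $j$ is the intended final step.
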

\begin{proof}
Set, for every $n\in\NN$, $x_n=j(\mathsf{x}_n)$,
$q_n=j(\mathsf{q}_n)$,
$s_n=(\mathsf{s}_{i,n})_{1\leq i\leq m}$,
$z_n=(\mathsf{z}_{i,n})_{1\leq i\leq m}$, and
$p_n=(\mathsf{p}_{i,n})_{1\leq i\leq m}$.
It follows from Proposition~\ref{p:app1}\ref{p:app1i} and
\eqref{e:algo2} that, for every $n\in\NN$, $x_n=P_Vz_n$ and
$q_n=P_Vp_n=P_Vs_n$. Hence,
it follows from \eqref{e:defprodc} and Proposition~\ref{p:app1} that
\eqref{e:algo2} can be written equivalently as
\eqref{e:algo}.
Altogether, Corollary~\ref{c:0} and
Proposition~\ref{p:app1}\ref{p:app1v} yield the results.
\end{proof}

\begin{remark}\
\label{rem:1}
In the particular case when $m=2$, $B=0$, and
$\omega_1=\omega_2=1/2$, the method proposed in Theorem~\ref{t:4}
reduces to 
 \begin{align}
\label{e:algo3}
(\forall n\in\NN)\quad
&\left\lfloor 
\begin{array}{l}
\mathsf{x}_{n}=(\mathsf{z}_{1,n}+\mathsf{z}_{2,n})/2\\
\mathsf{p}_{1,n}=J_{2\gamma
\mathsf{A}_1}(\mathsf{z}_{1,n})\\
\mathsf{p}_{2,n}=J_{2\gamma
\mathsf{A}_2}(\mathsf{z}_{2,n})\\
\mathsf{z}_{1,n+1}=\mathsf{z}_{1,n}+\lambda_n(\mathsf{p}_{2,n}
-\mathsf{x}_{n})\\
\mathsf{z}_{2,n+1}=\mathsf{z}_{2,n}+\lambda_n(\mathsf{p}_{1,n}
-\mathsf{x}_{n}),
\end{array}
\right.
\end{align}
which is exactly the method proposed in \cite[Remark~6.2(ii)]{opti1} for
finding a zero of the sum of two maximally monotone operators  
$\mathsf{A}_1$ and $\mathsf{A}_2$. 
In the case when these resolvents are hard to calculate, \eqref{e:algo3}
provides an alternative method which computes them in parallel.
\end{remark}

\subsection{Primal-Dual Monotone Inclusions}

This section is devoted to the numerical resolution of a very general composite primal-dual monotone inclusion involving vectorial subspaces. A difference of the method in Section~\ref{ssec:41}, the algorithm proposed in this section deals with monotone operators composed with linear transformations and solves simultaneously primal and dual inclusions.

Let us introduce a partial sum of two set-valued operators with respect a closed vectorial subspace. 
This notion is a generalization of the parallel sum (see, e.g., \cite{Bot12} and the references therein).
\begin{definition}
Let $\HH$ be a real Hilbert space, let $U\subset\HH$ be a closed
vectorial subspace, and let $A\colon\HH\to2^{\HH}$ and $B\colon\HH\to
2^{\HH}$ be two non linear operators. The {\em partial sum of
$A$ and $B$ with respect to $U$} is defined by
\begin{equation}
A\infconv_{\!U} B=\big(A_U+B_U\big)_U.
\end{equation}
In particular, we have $A\infconv_{\!\HH} B=A+B$
and $A\infconv_{\!\{0\}} B=A\infconv B=(A^{-1}+B^{-1})^{-1}$.
\end{definition}
Note that, since the operation $A\mapsto A_U$ preserves monotonicity \cite{Spin83}, if $A$ and $B$ are monotone then $A\infconv_{\!U} B$ is monotone as well.
In this section we are interested in the following problem.
\begin{problem}
\label{prob:appPD}
Let $\mathsf{H},(\mathsf{G}_i)_{1\leq i\leq m}$ be real Hilbert spaces,
for every $i\in\{1,\ldots,m\}$, let $\mathsf{U}\subset\mathsf{H}$
and $\mathsf{V}_i\subset\mathsf{G}_i$ be closed vectorial spaces, 
let $\mathsf{A}\colon\mathsf{H}\to 2^{\mathsf{H}}$ and $\mathsf{B}_i\colon\mathsf{G}_i\to 2^{\mathsf{G}_i}$ be 
maximally monotone, let $\mathsf{L}_i\colon\mathsf{H}\to \mathsf{G}_i$ be linear and bounded,
let $\mathsf{D}_i\colon\mathsf{G}_i\to 2^{\mathsf{G}_i}$ be monotone such that
$(\mathsf{D}_i)_{\mathsf{V}_i^{\bot}}$ is $\nu_i$-lipschitzian for some $\nu_i\in\RPP$,
let $\mathsf{C}\colon\mathsf{H}\to\mathsf{H}$ be monotone and $\mu$-lipschitzian for some
$\mu\in\RPP$,
let $\mathsf{z}\in\mathsf{H}$, and let $\mathsf{b}_i\in \mathsf{G}_i$.
The problem is to solve the primal inclusion
\begin{equation}
\label{e:primal}
\text{find}\;\;\mathsf{x}\in\mathsf{H}\quad\text{such that}\quad 
\mathsf{z}\in \mathsf{A}\mathsf{x}+N_\mathsf{U}\mathsf{x}+\sum_{i=1}^m\Big(\mathsf{L}_i^*P_{\mathsf{V}_i}(\mathsf{B}_i\infconv_{\!\mathsf{V}_i^{\bot}}
\mathsf{D}_i+N_{\mathsf{V}_i})P_{\mathsf{V}_i}(\mathsf{L}_i\mathsf{x}-\mathsf{b}_i)\Big)+\mathsf{C}\mathsf{x}
\end{equation}
together with the dual inclusion
\begin{multline}
\label{e:dual}
\text{find}\;\;\mathsf{u}_1\in\mathsf{G}_1,\ldots, \mathsf{u}_m\in\mathsf{G}_m\hspace{.2cm}\text{such
that}\\ 
(\exi \mathsf{x}\in\mathsf{H})\:\:
\begin{cases}
\mathsf{z}-\sum_{i=1}^m\mathsf{L}_i^*P_{\mathsf{V}_i}\mathsf{u}_i\in \mathsf{A}\mathsf{x}+\mathsf{C}\mathsf{x}+N_\mathsf{U}\mathsf{x}\\
(\forall i\in\{1,\ldots,m\})\: \mathsf{u}_i\in P_{\mathsf{V}_i}(\mathsf{B}_i\infconv_{\!\mathsf{V}_i^{\bot}}
\mathsf{D}_i+N_{\mathsf{V}_i})P_{\mathsf{V}_i}(\mathsf{L}_i\mathsf{x}-\mathsf{b}_i).
\end{cases}
\end{multline}

The set of solutions to \eqref{e:primal} is denoted by 
${\mathcal P}$ and the set of solutions to \eqref{e:dual} by 
${\mathcal D}$, which are assumed to be nonempty.
\end{problem}

In the particular case when $\mathsf{U}=\mathsf{H}$ and, for every
$i\in\{1,\ldots,m\}$, $\mathsf{V}_i=\mathsf{G}_i$, Problem~\ref{prob:appPD} reduces
to the problem solved in \cite{Comb12}, where a convergent primal-dual algorithm activating separately each involved operator is proposed. In the case when, for every $i\in\{1,\ldots,m\}$, $\mathsf{V}_i=\mathsf{G}_i$, Problem~\ref{prob:appPD} reduces to the problem addressed in \cite{Bot13}, where a splitting method with ergodic convergence is provided. A disadvantage of this algorithm is the presence of vanishing parameters which may lead to numerical instabilities together with additionally conditions difficult to be verified in general. At the best of our knowledge, the general case has not been tackled in the literature via splitting methods. 

Problem~\ref{prob:appPD} requires a lipschitzian condition on $({\mathsf{D}_i}_{\mathsf{V}_i^{\bot}})_{1\leq i\leq m}$. In the simplest case when, for every $i\in\{1,\ldots,m\}$, $\mathsf{V}_i=\mathsf{G}_i$, this condition reduces to the lipschitzian property on ${\mathsf{D}_i}^{-1}$, which is trivially satisfied, e.g., when $\mathsf{D}_i0=\mathsf{G}_i$ and, for every $y\neq0$, $\mathsf{D}_iy=\varnothing$.
The following proposition furnishes other non-trivial instances in which the partial inverse of a monotone operator with respect to a closed vectorial subspace is lipschitzian.
\begin{proposition}
\label{p:partinvD}
Let $\mathsf{V}$ be a closed vectorial subspace of a real Hilbert space $\mathsf{H}$ and suppose that one of the following holds.
\begin{enumerate}
\item\label{p:partinvDi} $\mathsf{D}\colon\mathsf{H}\to\mathsf{H}$ is $\beta$-strongly monotone and $\nu$-cocoercive.
\item\label{p:partinvDii} $\mathsf{D}=\nabla \mathsf{f}$, where $\mathsf{f}\colon\mathsf{H}\to\RX$ is differentiable, $\beta$-strongly convex, and $\nabla \mathsf{f}$ is $\nu^{-1}$-lipschitzian. 
\item\label{p:partinvDiii} $\mathsf{D}$ is linear bounded operator satisfying, for every $\mathsf{x}\in\mathsf{H}$, $\scal{\mathsf{x}}{\mathsf{D}\mathsf{x}}\geq\beta\|\mathsf{x}\|^2$, and $\nu=\beta/\|\mathsf{D}\|^2$. 
\end{enumerate}
Then $\mathsf{D}_\mathsf{V}$ is $\alpha$-cocoercive and $\alpha$-strongly monotone with $\alpha=\min\{\beta,\nu\}/2$. In particular, $\mathsf{D}_\mathsf{V}$ is $\alpha^{-1}$-lipschitzian.
\end{proposition}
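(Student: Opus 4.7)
The plan is to reduce all three hypotheses to a common denominator and then exploit the orthogonal decomposition $\mathsf{H}=\mathsf{V}\oplus\mathsf{V}^\bot$ together with the definition \eqref{e:partialinv} of the partial inverse.

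First I would argue that in each of the three cases, $\mathsf{D}$ is both $\beta$-strongly monotone and $\nu$-cocoercive. Case \ref{p:partinvDi} is immediate. For \ref{p:partinvDii}, $\beta$-strong convexity of $\mathsf{f}$ yields $\beta$-strong monotonicity of $\nabla\mathsf{f}$, while the Baillon--Haddad theorem converts the $\nu^{-1}$-Lipschitz property of $\nabla\mathsf{f}$ into $\nu$-cocoercivity. For \ref{p:partinvDiii}, strong monotonicity with constant $\beta$ is directly the hypothesis, and cocoercivity follows from $\|\mathsf{D}(\mathsf{x}-\mathsf{y})\|^2\leq\|\mathsf{D}\|^2\|\mathsf{x}-\mathsf{y}\|^2$ combined with $\scal{\mathsf{x}-\mathsf{y}}{\mathsf{D}(\mathsf{x}-\mathsf{y})}\geq\beta\|\mathsf{x}-\mathsf{y}\|^2\geq(\beta/\|\mathsf{D}\|^2)\|\mathsf{D}(\mathsf{x}-\mathsf{y})\|^2$.

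Once this reduction is made, the heart of the argument is the following. Take $(\mathsf{x}_1,\mathsf{y}_1),(\mathsf{x}_2,\mathsf{y}_2)\in\gr\mathsf{D}_\mathsf{V}$, and set, for $i\in\{1,2\}$,
\begin{equation*}
\mathsf{u}_i=P_\mathsf{V}\mathsf{x}_i+P_{\mathsf{V}^\bot}\mathsf{y}_i,\qquad \mathsf{v}_i=P_\mathsf{V}\mathsf{y}_i+P_{\mathsf{V}^\bot}\mathsf{x}_i.
\end{equation*}
By \eqref{e:partialinv}, $\mathsf{v}_i=\mathsf{D}\mathsf{u}_i$, and $\mathsf{x}_i=P_\mathsf{V}\mathsf{u}_i+P_{\mathsf{V}^\bot}\mathsf{v}_i$, $\mathsf{y}_i=P_\mathsf{V}\mathsf{v}_i+P_{\mathsf{V}^\bot}\mathsf{u}_i$. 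Using orthogonality of $\mathsf{V}$ and $\mathsf{V}^\bot$, a direct computation gives the two key identities
\begin{equation*}
\scal{\mathsf{x}_1-\mathsf{x}_2}{\mathsf{y}_1-\mathsf{y}_2}=\scal{\mathsf{u}_1-\mathsf{u}_2}{\mathsf{v}_1-\mathsf{v}_2},\qquad \|\mathsf{x}_1-\mathsf{x}_2\|^2+\|\mathsf{y}_1-\mathsf{y}_2\|^2=\|\mathsf{u}_1-\mathsf{u}_2\|^2+\|\mathsf{v}_1-\mathsf{v}_2\|^2.
\end{equation*}

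Then I would add the two inequalities $\scal{\mathsf{u}_1-\mathsf{u}_2}{\mathsf{v}_1-\mathsf{v}_2}\geq\beta\|\mathsf{u}_1-\mathsf{u}_2\|^2$ and $\scal{\mathsf{u}_1-\mathsf{u}_2}{\mathsf{v}_1-\mathsf{v}_2}\geq\nu\|\mathsf{v}_1-\mathsf{v}_2\|^2$ coming from strong monotonicity and cocoercivity, and lower-bound the right-hand side by $\min\{\beta,\nu\}(\|\mathsf{u}_1-\mathsf{u}_2\|^2+\|\mathsf{v}_1-\mathsf{v}_2\|^2)$. Plugging in the identities above yields
\begin{equation*}
\scal{\mathsf{x}_1-\mathsf{x}_2}{\mathsf{y}_1-\mathsf{y}_2}\geq\alpha\bigl(\|\mathsf{x}_1-\mathsf{x}_2\|^2+\|\mathsf{y}_1-\mathsf{y}_2\|^2\bigr)
\end{equation*}
with $\alpha=\min\{\beta,\nu\}/2$, which simultaneously proves $\alpha$-strong monotonicity and $\alpha$-cocoercivity of $\mathsf{D}_\mathsf{V}$. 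Strong monotonicity implies single-valuedness of $\mathsf{D}_\mathsf{V}$, and an application of Cauchy--Schwarz to the cocoercivity inequality gives the $\alpha^{-1}$-Lipschitz property.

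I do not foresee a substantive obstacle: the only delicate step is the orthogonal-decomposition bookkeeping that produces the two identities, but this is routine once the variables $(\mathsf{u}_i,\mathsf{v}_i)$ are introduced. The reduction in \ref{p:partinvDii} relies on Baillon--Haddad, and in \ref{p:partinvDiii} on the elementary norm estimate for bounded linear operators; both are standard and can be invoked directly.
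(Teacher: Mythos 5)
Your proposal is correct and follows essentially the same route as the paper: reduce cases \ref{p:partinvDii} and \ref{p:partinvDiii} to case \ref{p:partinvDi} (via Baillon--Haddad and the elementary norm bound, respectively), then transfer strong monotonicity and cocoercivity through the graph correspondence $(\mathsf{x},\mathsf{y})\in\gr\mathsf{D}_\mathsf{V}\Leftrightarrow(P_\mathsf{V}\mathsf{x}+P_{\mathsf{V}^\bot}\mathsf{y},P_\mathsf{V}\mathsf{y}+P_{\mathsf{V}^\bot}\mathsf{x})\in\gr\mathsf{D}$ and average the two resulting inequalities. The only cosmetic difference is that you name the transformed points $\mathsf{u}_i,\mathsf{v}_i$ and isolate the two orthogonality identities, whereas the paper carries out the same computation directly in terms of the projections.
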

\begin{proof}
\ref{p:partinvDi}:
Let $(\mathsf{x},\mathsf{u})$ and $(\mathsf{y},\mathsf{v})$ in $\gr(\mathsf{D}_\mathsf{V})$. Then it follows from \eqref{e:partialinv} that $(P_\mathsf{V}\mathsf{x}+P_{\mathsf{V}^{\bot}}\mathsf{u},P_\mathsf{V}\mathsf{u}+P_{\mathsf{V}^{\bot}}\mathsf{x})$ and $(P_\mathsf{V}\mathsf{y}+P_{\mathsf{V}^{\bot}}\mathsf{v},P_\mathsf{V}\mathsf{v}+P_{\mathsf{V}^{\bot}}\mathsf{y})$ are in $\gr(\mathsf{D})$, and, from the strong monotonicity assumption on $\mathsf{D}$, we have
\begin{align}
\label{e:auxstrong}
\scal{\mathsf{x}-\mathsf{y}}{\mathsf{u}-\mathsf{v}}&=
\scal{P_\mathsf{V}(\mathsf{x}-\mathsf{y})}{P_\mathsf{V}(\mathsf{u}-\mathsf{v})}+\scal{P_{\mathsf{V}^{\bot}}(\mathsf{u}-\mathsf{v})}{P_{\mathsf{V}^{\bot}}(\mathsf{x}-\mathsf{y})}\nonumber\\
&=\scal{P_\mathsf{V}\mathsf{x}+P_{\mathsf{V}^{\bot}}\mathsf{u}-(P_\mathsf{V}\mathsf{y}+P_{\mathsf{V}^{\bot}}\mathsf{v})}{P_\mathsf{V}\mathsf{u}+P_{\mathsf{V}^{\bot}}\mathsf{x}-(P_\mathsf{V}\mathsf{v}+P_{\mathsf{V}^{\bot}}\mathsf{y})}
\nonumber\\
&\geq
\beta\|P_\mathsf{V}\mathsf{x}+P_{\mathsf{V}^{\bot}}\mathsf{u}-(P_\mathsf{V}\mathsf{y}+P_{\mathsf{V}^{\bot}}\mathsf{v})\|^2\nonumber\\
&=\beta(\|P_\mathsf{V}(\mathsf{x}-\mathsf{y})\|^2+\|P_{\mathsf{V}^{\bot}}(\mathsf{u}-\mathsf{v})\|^2).
\end{align}
Analogously, the cocoercivity assumption on $\mathsf{D}$ yields
$\scal{\mathsf{x}-\mathsf{y}}{\mathsf{u}-\mathsf{v}}
\geq
\nu(\|P_\mathsf{V}(\mathsf{u}-\mathsf{v})\|^2+\|P_{\mathsf{V}^{\bot}}(\mathsf{x}-\mathsf{y})\|^2)$. Hence, it follows from \eqref{e:auxstrong} that
\begin{equation}
\scal{\mathsf{x}-\mathsf{y}}{\mathsf{u}-\mathsf{v}}\geq\frac{\beta}{2}(\|P_\mathsf{V}(\mathsf{x}-\mathsf{y})\|^2+\|P_{\mathsf{V}^{\bot}}(\mathsf{u}-\mathsf{v})\|^2)+\frac{\nu}{2}(\|P_\mathsf{V}(\mathsf{u}-\mathsf{v})\|^2+\|P_{\mathsf{V}^{\bot}}(\mathsf{x}-\mathsf{y})\|^2),
\end{equation}
which yields
$
\scal{\mathsf{x}-\mathsf{y}}{\mathsf{u}-\mathsf{v}}\geq\alpha\big(\|\mathsf{x}-\mathsf{y}\|^2+\|\mathsf{u}-\mathsf{v}\|^2\big)
$
and the result follows.
\ref{p:partinvDii}: From the strong convexity of $\mathsf{f}$ we have that $\mathsf{D}=\nabla \mathsf{f}$ is $\beta$-strongly monotone and it follows from \cite{Bail77} that $\mathsf{D}$ is $\nu$-cocoercive. Hence, the result follows from \ref{p:partinvDi}.
\ref{p:partinvDiii}: Since $\mathsf{D}$ is linear and bounded we have $\|\mathsf{x}\|^2\geq\|\mathsf{D}\mathsf{x}\|^2/\|\mathsf{D}\|^2$. Then $\mathsf{D}$ is $\beta$-strongly monotone and $\nu$-cocoercive and the result follows from \ref{p:partinvDi}.
\end{proof}

The following proposition gives a connection between Problem~\ref{prob:appPD} and Problem~\ref{prob:1}.
\begin{proposition}
\label{p:prodspace}
In the real Hilbert space
$\HH=\mathsf{H}\oplus\mathsf{G}_1\oplus\cdots\oplus\mathsf{G}_m$ set
\begin{equation}
\label{e:defmaxmonprod}
\begin{cases}
A\colon\HH\to 2^{\HH}\colon(\mathsf{x},\mathsf{u}_1,\ldots,\mathsf{u}_m)\mapsto
(-\mathsf{z}+\mathsf{A}\mathsf{x})\times
(P_{\mathsf{V}_1}\mathsf{b}_1+(\mathsf{B}_1)_{\mathsf{V}_1^{\bot}}\mathsf{u}_1)\times\cdots\times(P_{\mathsf{V}_m}\mathsf{b}_m+(\mathsf{B}_m)_{\mathsf{V}_m^{\bot}}\mathsf{u}_m)\\
L\colon\HH\to\HH\colon(\mathsf{x},\mathsf{u}_1,\ldots,
\mathsf{u}_m)\mapsto\big(\sum_ { i=1 }^m\mathsf{L}_i^*P_{\mathsf{V}_i}\mathsf{u}_i,-P_{\mathsf{V}_1}\mathsf{L}_1\mathsf{x},\ldots,-P_{\mathsf{V}_m}\mathsf{L}_m\mathsf{x}\big)\\
C\colon\HH\to\HH\colon(\mathsf{x},\mathsf{u}_1,\ldots,\mathsf{u}_m)\mapsto\big(\mathsf{C}\mathsf{x},
(\mathsf{D}_1)_{\mathsf{V}_1^{\bot}}\mathsf{u}_1,\ldots,(\mathsf{D}_m)_{\mathsf{V}_m^{\bot}}\mathsf{u}_m\big)\\
B\colon\HH\to\HH\colon(\mathsf{x},\mathsf{u}_1,\ldots,\mathsf{u}_m)\mapsto
(C+L)(\mathsf{x},\mathsf{u}_1,\ldots,\mathsf{u}_m)\\
W=\mathsf{U}\times \mathsf{V}_1\times\cdots\times \mathsf{V}_m\\
\chi=\max\{\mu,\nu_1,\ldots,\nu_m\}+\sqrt{\sum_{i=1}^m\|\mathsf{L}_i\|^2}.
\end{cases}
\end{equation}
Then the following hold.
\begin{enumerate}
 \item\label{p:prodspacei} $A$ is maximally monotone and, for every
$\gamma\in\RPP$,
\begin{equation}
J_{\gamma A}\colon(\mathsf{x},\mathsf{u}_1,\ldots,\mathsf{u}_m)\mapsto\Big(J_{\gamma
\mathsf{A}}(\mathsf{x}+\mathsf{z}),J_{\gamma(\mathsf{B}_1)_{\mathsf{V}_1^{\bot}}}(\mathsf{u}_1-P_{\mathsf{V}_1}\mathsf{b}_1),\ldots,
J_{\gamma(\mathsf{B}_m)_{\mathsf{V}_m^{\bot}}
}(\mathsf{u}_m-P_{\mathsf{V}_m}\mathsf{b}_m)\Big).
\end{equation}
\item\label{p:prodspaceii} $L$ is a linear bounded operator, $L^*=-L$, and $\|L\|\leq\sqrt{\sum_{i=1}^m\|\mathsf{L}_i\|^2}$.
\item\label{p:prodspaceiii} $B$ is monotone and $\chi$-lipschitzian.
\item\label{p:prodspaceiv} $W$ is a closed vectorial subspace of $\HH$, $N_{W}\colon(\mathsf{x},\mathsf{u}_1,\ldots,\mathsf{u}_m)\mapsto N_\mathsf{U}\mathsf{x}\times N_{\mathsf{V}_1}\mathsf{u}_1\times\cdots\times N_{\mathsf{V}_m}\mathsf{u}_m$, and
$P_{W}\colon(\mathsf{x},\mathsf{u}_1,\ldots,\mathsf{u}_m)\mapsto(P_\mathsf{U}\mathsf{x},P_{\mathsf{V}_1}\mathsf{u}_1,
\ldots,P_{\mathsf{V}_m}\mathsf{u}_m).$
\item\label{p:prodspacev} 
$\zer(A+B+N_{
W})\subset\mathcal{P}\times\mathcal{D}$.
\item\label{p:prodspacevi} $\mathcal{P}\neq\emp\:\:\Leftrightarrow\:\: \zer(A+B+N_{W})\neq\emp\:\:\Leftrightarrow\:\:\mathcal{D}\neq\emp.$
\end{enumerate}
\end{proposition}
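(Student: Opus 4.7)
My plan is to establish parts (i)--(iv) by direct inspection of the product structure, then derive (v) by a componentwise unpacking of the inclusion $0\in Ax+Bx+N_{W}x$, and finally obtain (vi) from (v) together with a reverse construction.

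For (i), $A$ is a Cartesian product of maximally monotone operators: the first factor is the translate $-\mathsf{z}+\mathsf{A}$ of $\mathsf{A}$, and each remaining factor $P_{\mathsf{V}_i}\mathsf{b}_i+(\mathsf{B}_i)_{\mathsf{V}_i^{\bot}}$ is maximally monotone because the partial inverse preserves maximal monotonicity \cite{Spin83}; the resolvent formula then follows from the product rule for resolvents \cite[Proposition~23.16]{Livre1}. For (ii), $L$ is clearly linear and bounded; the skew-adjointness $L^{*}=-L$ is a direct computation using $P_{\mathsf{V}_i}^{*}=P_{\mathsf{V}_i}$ and $\mathsf{L}_i^{**}=\mathsf{L}_i$, and the bound $\|L\|\leq\sqrt{\sum_i\|\mathsf{L}_i\|^2}$ follows from Cauchy--Schwarz applied to the first component of $Lx$. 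For (iii), skew-adjointness gives $\scal{x}{Lx}=0$, hence $L$ is monotone; $C$ is monotone because $\mathsf{C}$ and each $(\mathsf{D}_i)_{\mathsf{V}_i^{\bot}}$ are monotone (again by Spingarn's preservation), and, with respect to the orthogonal-sum norm on $\HH$, its Lipschitz constant is $\max\{\mu,\nu_1,\ldots,\nu_m\}$. The triangle inequality combined with the norm bound for $L$ produces the announced $\chi$. Part (iv) is immediate from the product structure of $W$ and \eqref{e:normalcone}.

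The core of the argument is (v). Let $x=(\mathsf{x},\mathsf{u}_1,\ldots,\mathsf{u}_m)\in W$ satisfy $0\in Ax+Bx+N_{W}x$. Reading off the first component yields
\[
\mathsf{z}-\sum_{i=1}^m\mathsf{L}_i^{*}P_{\mathsf{V}_i}\mathsf{u}_i\in \mathsf{A}\mathsf{x}+\mathsf{C}\mathsf{x}+N_{\mathsf{U}}\mathsf{x},
\]
which is exactly the first line of the dual system. Reading off the $i$-th remaining component and rearranging yields
\[
P_{\mathsf{V}_i}(\mathsf{L}_i\mathsf{x}-\mathsf{b}_i)\in\big((\mathsf{B}_i)_{\mathsf{V}_i^{\bot}}+(\mathsf{D}_i)_{\mathsf{V}_i^{\bot}}+N_{\mathsf{V}_i}\big)\mathsf{u}_i.
\]
Since $\mathsf{u}_i\in\mathsf{V}_i$ forces $\mathsf{u}_i=P_{\mathsf{V}_i}\mathsf{u}_i$, applying Proposition~\ref{p:propiedpi}\ref{p:propiedpiii} with $V=\mathsf{V}_i$ together with the identity $\big((\mathsf{B}_i)_{\mathsf{V}_i^{\bot}}+(\mathsf{D}_i)_{\mathsf{V}_i^{\bot}}\big)_{\mathsf{V}_i^{\bot}}=\mathsf{B}_i\infconv_{\!\mathsf{V}_i^{\bot}}\mathsf{D}_i$ converts the above into $\mathsf{u}_i\in P_{\mathsf{V}_i}(\mathsf{B}_i\infconv_{\!\mathsf{V}_i^{\bot}}\mathsf{D}_i+N_{\mathsf{V}_i})P_{\mathsf{V}_i}(\mathsf{L}_i\mathsf{x}-\mathsf{b}_i)$, i.e., the second line of the dual system. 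Substituting these $\mathsf{u}_i$ back into the first inclusion yields the primal inclusion, so $(\mathsf{x},\mathsf{u}_1,\ldots,\mathsf{u}_m)\in\mathcal{P}\times\mathcal{D}$.

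For (vi), the forward implications $\zer(A+B+N_{W})\neq\emp\Rightarrow\mathcal{P}\neq\emp$ and $\Rightarrow\mathcal{D}\neq\emp$ are immediate from (v). Conversely, given $\mathsf{x}\in\mathcal{P}$, the primal inclusion provides $\mathsf{u}_i\in P_{\mathsf{V}_i}(\mathsf{B}_i\infconv_{\!\mathsf{V}_i^{\bot}}\mathsf{D}_i+N_{\mathsf{V}_i})P_{\mathsf{V}_i}(\mathsf{L}_i\mathsf{x}-\mathsf{b}_i)\subset\mathsf{V}_i$ such that $\mathsf{z}-\sum_i\mathsf{L}_i^{*}P_{\mathsf{V}_i}\mathsf{u}_i\in\mathsf{A}\mathsf{x}+\mathsf{C}\mathsf{x}+N_{\mathsf{U}}\mathsf{x}$; running the two displayed calculations of (v) in reverse, again invoking Proposition~\ref{p:propiedpi}\ref{p:propiedpiii} and $\mathsf{u}_i=P_{\mathsf{V}_i}\mathsf{u}_i$, exhibits $(\mathsf{x},\mathsf{u}_1,\ldots,\mathsf{u}_m)$ as a point of $\zer(A+B+N_{W})$, and starting from $(\mathsf{u}_1,\ldots,\mathsf{u}_m)\in\mathcal{D}$ one argues analogously using the auxiliary $\mathsf{x}$ packed into the definition of $\mathcal{D}$. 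The main obstacle I anticipate is the bookkeeping around Proposition~\ref{p:propiedpi}\ref{p:propiedpiii}: one must recognize that $N_{\mathsf{V}_i}\mathsf{u}_i\neq\emp$ forces $\mathsf{u}_i\in\mathsf{V}_i$, which is precisely what legitimates sandwiching the relevant sum operator between two copies of $P_{\mathsf{V}_i}$ and switching between the direct-sum-plus-normal-cone and the partial-sum-plus-normal-cone representations in both directions.
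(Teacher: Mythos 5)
Your proposal is correct and follows essentially the same route as the paper: parts (i)--(iv) by the product/translation structure (with $L$ written as a skew operator built from $\mathsf{M}\colon(\mathsf{u}_i)_i\mapsto\sum_i\mathsf{L}_i^*P_{\mathsf{V}_i}\mathsf{u}_i$), and part (v) by the same componentwise unpacking in which Proposition~\ref{p:propiedpi}\ref{p:propiedpiii} and the definition of the partial sum convert $\big((\mathsf{B}_i)_{\mathsf{V}_i^{\bot}}+(\mathsf{D}_i)_{\mathsf{V}_i^{\bot}}+N_{\mathsf{V}_i}\big)^{-1}$ sandwiched between projections into $\mathsf{B}_i\infconv_{\!\mathsf{V}_i^{\bot}}\mathsf{D}_i+N_{\mathsf{V}_i}$ sandwiched between projections. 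Your part (vi) arranges the implications slightly differently from the paper's cycle $\mathcal{P}\neq\emp\Rightarrow\mathcal{D}\neq\emp\Rightarrow\zer(A+B+N_W)\neq\emp\Rightarrow\mathcal{P}\neq\emp$, but rests on the same reversible equivalences, so the argument is the same in substance.
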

\begin{proof}
\ref{p:prodspacei}: Since, for every $i\in\{1,\ldots,m\}$, $(\mathsf{B}_i)_{\mathsf{V}_i^{\bot}}$ is maximally monotone, the result follows from \cite[Proposition~23.15 and Proposition~23.16]{Livre1}. \ref{p:prodspaceii}: Let us define $\mathsf{M}\colon\mathsf{G}_1\oplus\cdots\oplus\mathsf{G}_m\to\mathsf{H}$ by
$\mathsf{M}\colon(\mathsf{u}_1,\ldots,\mathsf{u}_m)\mapsto\sum_{i=1}^m\mathsf{L}_i^*P_{\mathsf{V}_i}\mathsf{u}_i$. Since $(\mathsf{L}_i)_{1\leq i\leq m}$ and $(P_{\mathsf{V}_i})_{1\leq i\leq m}$ are linear bounded operators, it is easy to check that $\mathsf{M}$ is linear and bounded, $\mathsf{M}^*\colon\mathsf{x}\mapsto(P_{\mathsf{V}_1}\mathsf{L}_1\mathsf{x},\ldots,P_{\mathsf{V}_m}\mathsf{L}_m\mathsf{x})$, and that we can rewrite $L$ as $L\colon(\mathsf{x},\mathsf{u}_1,\ldots,\mathsf{u}_m)\mapsto (\mathsf{M}(\mathsf{u}_1,\ldots,\mathsf{u}_m),-\mathsf{M}^*\mathsf{x})$. Hence, we deduce from \cite[Proposition~2.7(ii)]{Siopt3} that $L$ is linear and bounded, that $L^*=-L$, and that $\|L\|=\|M\|$.
Now, for every $(\mathsf{u}_1,\ldots,\mathsf{u}_m)\in\mathsf{G}_1\oplus\cdots\oplus\mathsf{G}_m$, we have from triangle and H\"older inequalities $\|M(\mathsf{u}_1,\ldots,\mathsf{u}_m)\|\leq\sum_{i=1}^m\|\mathsf{L}_i\|\|P_{\mathsf{V}_i}\|\|\mathsf{u}_i\|\leq \sum_{i=1}^m\|\mathsf{L}_i\|\|\mathsf{u}_i\|\leq\sqrt{\sum_{i=1}^m\|\mathsf{L}_i\|^2}\sqrt{\sum_{i=1}^m\|\mathsf{u}_i\|^2}$, 
which yields the last assertion.

\ref{p:prodspaceiii}: It follows from \ref{p:prodspaceii} that $L$ is linear, bounded, and skew. Therefore, it is monotone and $\|L\|$-lipschitzian. On the other hand, since $\mathsf{C}$ and $(\mathsf{D}_i)_{\mathsf{V}_i^{\bot}}$ are monotone and lipschitzian, $C$ is monotone and $\max\{\mu,\nu_1,\ldots,\nu_m\}$-lipschitzian. Altogether, it follows from \ref{p:prodspaceii} that $B=C+L$ is monotone and $\chi$-lipschitzian. \ref{p:prodspaceiv}: Clear. \ref{p:prodspacev}: Let $(\mathsf{x},\mathsf{u}_1,\ldots,\mathsf{u}_m)\in\mathsf{H}\times\mathsf{G}_1\times\cdots\mathsf{G}_m$. We have from \eqref{e:defmaxmonprod} and Proposition~\ref{p:propiedpi}\ref{p:propiedpiii} that 
\begin{align}
(\mathsf{x},\mathsf{u}_1,\ldots,\mathsf{u}_m)\in\zer(&A+B+N_{W})\nonumber\\
&\Leftrightarrow\quad 
\begin{cases}
\mathsf{0}\in -\mathsf{z}+\mathsf{A}\mathsf{x}+\mathsf{C}\mathsf{x}+\sum_{i=1}^m\mathsf{L}_i^*P_{\mathsf{V}_i}\mathsf{u}_i+N_\mathsf{U}\mathsf{x}\\
\mathsf{0}\in P_{\mathsf{V}_1}\mathsf{b}_1+(\mathsf{B}_1)_{\mathsf{V}_1^{\bot}}\mathsf{u}_1+(\mathsf{D}_1)_{\mathsf{V}_1^{\bot}}\mathsf{u}_1-P_{\mathsf{V}_1}\mathsf{L}_1\mathsf{x}+N_{\mathsf{V}_1}\mathsf{u}_1\\
\:\:\:\:\vdots\\
\mathsf{0}\in P_{\mathsf{V}_m}\mathsf{b}_m+(\mathsf{B}_m)_{\mathsf{V}_m^{\bot}}\mathsf{u}_m+(\mathsf{D}_m)_{\mathsf{V}_m^{\bot}}\mathsf{u}_m-P_{\mathsf{V}_m}\mathsf{L}_m\mathsf{x}+N_{\mathsf{V}_m}\mathsf{u}_m
\end{cases}\nonumber\\
&\Leftrightarrow\quad 
\begin{cases}
\mathsf{0}\in -\mathsf{z}+\mathsf{A}\mathsf{x}+\mathsf{C}\mathsf{x}+\sum_{i=1}^m\mathsf{L}_i^*P_{\mathsf{V}_i}\mathsf{u}_i+N_\mathsf{U}\mathsf{x}\\
P_{\mathsf{V}_1}(\mathsf{L}_1\mathsf{x}-\mathsf{b}_1)\in ((\mathsf{B}_1)_{\mathsf{V}_1^{\bot}}+(\mathsf{D}_1)_{\mathsf{V}_1^{\bot}}+N_{\mathsf{V}_1})\mathsf{u}_1,\: \mathsf{u}_1\in \mathsf{V}_1\\
\:\:\:\:\vdots\\
P_{\mathsf{V}_m}(\mathsf{L}_m\mathsf{x}-\mathsf{b}_m)\in((\mathsf{B}_m)_{\mathsf{V}_m^{\bot}}+(\mathsf{D}_m)_{\mathsf{V}_m^{\bot}}+N_{\mathsf{V}_m})\mathsf{u}_m,\: \mathsf{u}_m\in \mathsf{V}_m
\end{cases}\nonumber\\
&\Leftrightarrow\quad 
\begin{cases}
\mathsf{0}\in -\mathsf{z}+\mathsf{A}\mathsf{x}+\mathsf{C}\mathsf{x}+\sum_{i=1}^m\mathsf{L}_i^*P_{\mathsf{V}_i}\mathsf{u}_i+N_\mathsf{U}\mathsf{x}\\
\mathsf{u}_1\in P_{\mathsf{V}_1}  ((\mathsf{B}_1)_{\mathsf{V}_1^{\bot}}+(\mathsf{D}_1)_{\mathsf{V}_1^{\bot}}+N_{\mathsf{V}_1})^{-1}P_{\mathsf{V}_1}(\mathsf{L}_1\mathsf{x}-\mathsf{b}_1)\\
\:\:\:\:\vdots\\
\mathsf{u}_m\in P_{\mathsf{V}_m} ((\mathsf{B}_m)_{\mathsf{V}_m^{\bot}}+(\mathsf{D}_m)_{\mathsf{V}_m^{\bot}}+N_{\mathsf{V}_m})^{-1}P_{\mathsf{V}_m}(\mathsf{L}_m\mathsf{x}-\mathsf{b}_m)
\end{cases}\nonumber\\
&\label{e:aux2222}\Leftrightarrow\quad 
\begin{cases}
\mathsf{z}-\sum_{i=1}^m\mathsf{L}_i^*P_{\mathsf{V}_i}\mathsf{u}_i\in \mathsf{A}\mathsf{x}+\mathsf{C}\mathsf{x}+N_\mathsf{U}\mathsf{x}\\
\mathsf{u}_1\in P_{\mathsf{V}_1} (\mathsf{B}_1\infconv_{\!\mathsf{V}_1^{\bot}}\mathsf{D}_1+N_{\mathsf{V}_1})P_{\mathsf{V}_1}(\mathsf{L}_1\mathsf{x}-\mathsf{b}_1)\\
\:\:\:\:\vdots\\
\mathsf{u}_m\in P_{\mathsf{V}_m} (\mathsf{B}_m\infconv_{\!\mathsf{V}_m^{\bot}}\mathsf{D}_m+N_{\mathsf{V}_m})P_{\mathsf{V}_m}(\mathsf{L}_m\mathsf{x}-\mathsf{b}_m)
\end{cases}\\
&\Rightarrow \mathsf{z}\in \mathsf{A}\mathsf{x}+N_\mathsf{U}\mathsf{x}+\sum_{i=1}^m\mathsf{L}_i^*P_{\mathsf{V}_i}(\mathsf{B}_i\infconv_{\!\mathsf{V}_i^{\bot}}\mathsf{D}_i+N_{\mathsf{V}_i})P_{\mathsf{V}_i}(\mathsf{L}_i\mathsf{x}-\mathsf{b}_i)+\mathsf{C}\mathsf{x},
\end{align}
which yields $\mathsf{x}\in\mathcal{P}$. Moreover, \eqref{e:aux2222} yields $(\mathsf{u}_1,\ldots,\mathsf{u}_m)\in\mathcal{D}$.

\ref{p:prodspacevi}: We will prove $\mathcal{P}\neq\emp\Rightarrow\mathcal{D}\neq\emp\Rightarrow\zer(A+B+N_W)\neq\emp\Rightarrow\mathcal{P}\neq\emp$. If $\mathsf{x}\in\mathcal{P}$, there exist $(\mathsf{u}_1,\ldots,\mathsf{u}_m)$ such that \eqref{e:aux2222} holds and, hence, $(\mathsf{u}_1,\ldots,\mathsf{u}_m)\in\mathcal{D}$. Now, if $(\mathsf{u}_1,\ldots,\mathsf{u}_m)\in\mathcal{D}$, there exists $\mathsf{x}\in\mathsf{H}$ such that \eqref{e:aux2222} holds and we deduce from the equivalences in \eqref{e:aux2222} that $(\mathsf{x},\mathsf{u}_1,\ldots,\mathsf{u}_m)\in\zer(A+B+N_W)$. The last implication follows from \ref{p:prodspacev}.
\end{proof}

\begin{theorem}
\label{t:PDalgo}
In the setting of Problem~\ref{prob:appPD}, let $\gamma\in\left]0,1/\chi\right[$ where $\chi$ is defined in \eqref{e:defmaxmonprod}, and let $(\lambda_n)_{n\in\NN}$ be
a sequence in $\left[\varepsilon,1\right]$. Moreover, let $\mathsf{x}_0\in\mathsf{H}$, let
$(\mathsf{u}_{i,0})_{1\leq i\leq
m}\in\mathsf{G}_1\times\cdots\times\mathsf{G}_m$,
and iterate, for every $n\in\NN$,
\begin{align}
\label{e:algo4}
&\left\lfloor 
\begin{array}{l}
\mathsf{r}_{1,n}=\mathsf{x}_{n}-\gamma P_{\mathsf{U}}\big(CP_{\mathsf{U}}\mathsf{x}_{n}+\sum_{i=1}^{m}\mathsf{L}_i^*P_{\mathsf{V}_{i}}\mathsf{u}_{i,n}\big)\\
\mathsf{p}_{1,n}=J_{\gamma\mathsf{A}}(\mathsf{r}_{1,n}+\gamma\mathsf{z})\\
\mathsf{s}_{1,n}=2P_{\mathsf{U}}\mathsf{p}_{1,n}-\mathsf{p}_{1,n}+\mathsf{r}_{1,n}-P_{\mathsf{U}}\mathsf{r}_{1,n}\\
\text{\rm For }i=1,\ldots,m\\
\left\lfloor 
\begin{array}{l}
\mathsf{r}_{2,i,n}=\mathsf{u}_{i,n}-\gamma P_{\mathsf{V}_{i}}({\mathsf{D}_i}_{\mathsf{V}_i^{\bot}}P_{\mathsf{V}_{i}}\mathsf{u}_{i,n}-\mathsf{L}_iP_{\mathsf{U}}\mathsf{x}_{n})\\
\mathsf{p}_{2,i,n}=J_{\gamma{\mathsf{B}_i}_{\mathsf{V}_i^{\bot}}}(\mathsf{r}_{2,i,n}-\gamma P_{\mathsf{V}_{i}}\mathsf{b}_i)\\
\mathsf{s}_{2,i,n}=2P_{\mathsf{V}_i}\mathsf{p}_{2,i,n}-\mathsf{p}_{2,i,n}+\mathsf{r}_{2,i,n}-P_{\mathsf{V}_i}\mathsf{r}_{2,i,n}\\
\mathsf{t}_{2,i,n}=\mathsf{s}_{2,i,n}-\gamma P_{\mathsf{V}_{i}}({\mathsf{D}_i}_{\mathsf{V}_i^{\bot}}P_{\mathsf{V}_{i}}\mathsf{s}_{2,i,n}-\mathsf{L}_iP_{\mathsf{U}}\mathsf{s}_{1,n})\\
\mathsf{u}_{i,n+1}=\mathsf{u}_{i,n}+\lambda_n(\mathsf{t}_{2,i,n}-\mathsf{r}_{2,i,n})
\end{array}
\right.\\
\mathsf{t}_{1,n}=\mathsf{s}_{1,n}-\gamma P_{\mathsf{U}}\big(CP_{\mathsf{U}}\mathsf{s}_{1,n}+\sum_{i=1}^{m}\mathsf{L}_i^*P_{\mathsf{V}_{i}}\mathsf{s}_{2,i,n}\big)\\
\mathsf{x}_{n+1}=\mathsf{x}_{n}+\lambda_n(\mathsf{t}_{1,n}-\mathsf{r}_{1,n}).
\end{array}
\right.
\end{align}
Then, $\mathsf{x}_n\weakly\overline{\mathsf{x}}\in\mathsf{H}$ and, for every $i\in\{1,\ldots,m\}$, $\mathsf{u}_{i,n}\weakly\overline{\mathsf{u}}_i\in\mathsf{G}_i$, and  $(P_{\mathsf{U}}\overline{\mathsf{x}},P_{\mathsf{V}_1}\overline{\mathsf{u}}_1,\ldots,P_{\mathsf{V}_m}\overline{\mathsf{u}}_m)$ is a solution to Problem~\ref{prob:appPD}.
Moreover, $\mathsf{x}_{n+1}-\mathsf{x}_n\to 0$ and, for every $i\in\{1,\ldots,m\}$, $\mathsf{u}_{i,n+1}-\mathsf{u}_{i,n}\to 0$.
\end{theorem}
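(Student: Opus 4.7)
The strategy is to recognize the iteration \eqref{e:algo4} as the forward--Douglas--Rachford--forward method \eqref{e:algo} of Corollary~\ref{c:0} written coordinatewise in the product space $\HH=\mathsf{H}\oplus\mathsf{G}_1\oplus\cdots\oplus\mathsf{G}_m$, with the data $A$, $B$, $W$ supplied by Proposition~\ref{p:prodspace}. First I would set $z_n=(\mathsf{x}_n,\mathsf{u}_{1,n},\ldots,\mathsf{u}_{m,n})$. The coordinate formulas in Proposition~\ref{p:prodspace}\ref{p:prodspacei}\&\ref{p:prodspaceiv} give $P_Wz_n=(P_{\mathsf{U}}\mathsf{x}_n,P_{\mathsf{V}_1}\mathsf{u}_{1,n},\ldots,P_{\mathsf{V}_m}\mathsf{u}_{m,n})$ and a componentwise expression for $J_{\gamma A}$ carrying the shifts $+\gamma\mathsf{z}$ on the primal block and $-\gamma P_{\mathsf{V}_i}\mathsf{b}_i$ on the $i$-th dual block, while the decomposition $B=C+L$ with $L^{*}=-L$ from \eqref{e:defmaxmonprod} makes the primal forward step read $\mathsf{C}(\cdot)+\sum_i\mathsf{L}_i^{*}P_{\mathsf{V}_i}(\cdot)$ and the $i$-th dual forward step read $(\mathsf{D}_i)_{\mathsf{V}_i^{\bot}}(\cdot)-\mathsf{L}_i(\cdot)$. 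Direct substitution into each of the five lines of \eqref{e:algo}, together with the idempotency $P_{\mathsf{V}_i}\circ P_{\mathsf{V}_i}=P_{\mathsf{V}_i}$ absorbing the redundant inner projections produced by $BP_W$, reproduces coordinate by coordinate the five blocks of \eqref{e:algo4}.

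Next I would verify the hypotheses of Corollary~\ref{c:0} for this product-space instance. Proposition~\ref{p:prodspace}\ref{p:prodspacei},\ref{p:prodspaceiii},\&\ref{p:prodspaceiv} ensure that $A$ is maximally monotone, that $B$ is monotone and $\chi$-lipschitzian for the $\chi$ defined in \eqref{e:defmaxmonprod}, and that $W$ is a closed vector subspace; combining the standing assumption $\mathcal{P}\ne\emp$ with Proposition~\ref{p:prodspace}\ref{p:prodspacevi} yields $\zer(A+B+N_W)\ne\emp$. Hence the range $\gamma\in\left]0,1/\chi\right[$ fits exactly the admissible step size of Corollary~\ref{c:0}.

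Applying Corollary~\ref{c:0}, and extracting from the proof of Theorem~\ref{t:1} the full weak convergence $z_n\weakly\overline{z}$ and $z_{n+1}-z_n\to 0$, I would write $\overline{z}=(\overline{\mathsf{x}},\overline{\mathsf{u}}_1,\ldots,\overline{\mathsf{u}}_m)$. Weak continuity of each coordinate projection transfers these limits into $\mathsf{x}_n\weakly\overline{\mathsf{x}}$, $\mathsf{u}_{i,n}\weakly\overline{\mathsf{u}}_i$, $\mathsf{x}_{n+1}-\mathsf{x}_n\to 0$, and $\mathsf{u}_{i,n+1}-\mathsf{u}_{i,n}\to 0$; and the containment $P_W\overline{z}\in\zer(A+B+N_W)$ combined with Proposition~\ref{p:prodspace}\ref{p:prodspacev} applied to $P_W\overline{z}=(P_{\mathsf{U}}\overline{\mathsf{x}},P_{\mathsf{V}_1}\overline{\mathsf{u}}_1,\ldots,P_{\mathsf{V}_m}\overline{\mathsf{u}}_m)$ shows that this tuple lies in $\mathcal{P}\times\mathcal{D}$, i.e.\ solves Problem~\ref{prob:appPD}. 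The main obstacle is the coordinatewise bookkeeping that matches \eqref{e:algo4} with \eqref{e:algo}: one must carefully track the skew structure of $L$ so that $+\sum_i\mathsf{L}_i^{*}P_{\mathsf{V}_i}\mathsf{u}_{i,n}$ appears in the primal update while $-\mathsf{L}_iP_{\mathsf{U}}\mathsf{x}_n$ appears in the $i$-th dual update, and check that the translations by $\mathsf{z}$ and by $P_{\mathsf{V}_i}\mathsf{b}_i$ inside $J_{\gamma A}$ yield exactly the inhomogeneous terms that appear in \eqref{e:algo4}.
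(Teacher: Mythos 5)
Your proposal is correct and follows essentially the same route as the paper: the paper's proof also stacks the iterates into $z_n=(\mathsf{x}_n,\mathsf{u}_{1,n},\ldots,\mathsf{u}_{m,n})$, identifies \eqref{e:algo4} with \eqref{e:algo} via Proposition~\ref{p:prodspace}, and concludes by Corollary~\ref{c:0} together with Proposition~\ref{p:prodspace}\ref{p:prodspacev}. Your version merely spells out the coordinatewise bookkeeping and the hypothesis checks that the paper leaves implicit.
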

\begin{proof}
For every $n\in\NN$, denote by $z_n=(\mathsf{x}_n,\mathsf{u}_{1,n},\ldots,\mathsf{u}_{m,n})$, $r_n=(\mathsf{r}_{1,n},\mathsf{r}_{2,1,n},\ldots,\mathsf{r}_{2,m,n})$, $p_n=(\mathsf{p}_{1,n},\mathsf{p}_{2,1,n},\ldots,\mathsf{p}_{2,m,n})$, $s_n=(\mathsf{s}_{1,n},\mathsf{s}_{2,1,n},\ldots,\mathsf{s}_{2,m,n})$, and $t_n=(\mathsf{t}_{1,n},\mathsf{t}_{2,1,n},\ldots,\mathsf{t}_{2,m,n})$.
Then, it follows from Proposition~\ref{p:prodspace} that \eqref{e:algo4} is a particular instance of \eqref{e:algo}. Hence, the results follow from Corollary~\ref{c:0} and Proposition~\ref{p:prodspace}\ref{p:prodspacev}.
\end{proof}

\begin{remark}\
\begin{enumerate}
\item Even if Problem~\ref{prob:Fad} can be seen as a particular case of Problem~\ref{prob:appPD}, the methods in \eqref{e:algo4} and \eqref{e:algo3} have different structures. Indeed, in \eqref{e:algo4} dual variables are updated at each iteration, which may be numerically costly in large scale problems, while only primal variables are updated in Theorem~\ref{t:4}. 

\item Algorithm \eqref{e:algo4} activates independently each operator involved in Problem~\ref{prob:appPD}. The algorithm is explicit in each step if the resolvents of $\mathsf{A}$ and $({\mathsf{B}_i}_{\mathsf{V}_i^{\bot}})_{1\leq i\leq m}$ can be computed explicitly. Observe that the resolvent of the partial inverse of a maximally monotone operator can be explicitly found via Proposition~\ref{p:1}\ref{p:1i}.

\item Note that, when $\lambda_n\equiv1$, $\mathsf{U}=\mathsf{H}$, and, for every $i\in\{1,\ldots,m\}$, $\mathsf{V}_i=\mathsf{G}_i$, the method in Theorem~\ref{t:PDalgo} reduces to the algorithm proposed in \cite[Theorem~3.1]{Comb12} with constant step-size 

\item In the simplest case when $m=2$, $\mathsf{z}=\mathsf{A}=\mathsf{C}=\mathsf{b}_1=\mathsf{b}_2=0$, $\mathsf{L}_1=\mathsf{L_2}=\Id$, $\mathsf{U}=\mathsf{H}$, $\mathsf{V}_1\equiv\mathsf{G}_1$, $\mathsf{V}_2\equiv\mathsf{G}_2$, $\mathsf{D}_10=\mathsf{G}_1$, $\mathsf{D}_20=\mathsf{G}_2$, and for every $y\neq0$, $\mathsf{D}_1y=\mathsf{D}_2y=\varnothing$,
we have, for every $i\in\{1,2\}$, ${\mathsf{D}_i}_{V_i^{\bot}}={\mathsf{D}_i}_{\{0\}}=\mathsf{D}_i^{-1}\colon y\mapsto0$ and Problem~\ref{prob:appPD} reduces to find a zero of $\mathsf{B}_1+\mathsf{B}_2$. In this case \eqref{e:algo4} becomes

\begin{align}
\label{e:algo5}
(\forall n\in\NN)\quad
&\left\lfloor 
\begin{array}{l}
\mathsf{p}_{1,n}=J_{\gamma\mathsf{B}_1^{-1}}(\mathsf{u}_{1,n}+\gamma\mathsf{x}_n)\\
\mathsf{p}_{2,n}=J_{\gamma\mathsf{B}_2^{-1}}(\mathsf{u}_{2,n}+\gamma\mathsf{x}_n)\\
\mathsf{x}_{n+1}=\mathsf{x}_n-\gamma\lambda_n(\mathsf{p}_{1,n}+\mathsf{p}_{2,n})\\
\mathsf{u}_{1,n+1}=(1-\lambda_n)\mathsf{u}_{1,n}+\lambda_n\big(\mathsf{p}_{1,n}-\gamma^2(\mathsf{u}_{1,n}+\mathsf{u}_{2,n})\big)\\
\mathsf{u}_{2,n+1}=(1-\lambda_n)\mathsf{u}_{2,n}+\lambda_n\big(\mathsf{p}_{2,n}-\gamma^2(\mathsf{u}_{1,n}+\mathsf{u}_{2,n})\big).
\end{array}
\right.
\end{align}
A difference of the method derived in Remark~\ref{rem:1} for solving this problem, \eqref{e:algo5} update primal and dual variables and solve the primal and dual inclusion, simultaneously.
\end{enumerate}
\end{remark}

\subsection{Zero-Sum Games}
Our last application focus in the problem of finding a Nash equilibrium in continuous zero sum games. Some comments on finite zero-sum games are also provided. This problem can be formulated in the form of Problem~\ref{prob:1} and solved via an algorithm derived from Theorem~\ref{t:1}.
\begin{problem}
\label{prob:selle}
For every $i\in\{1,2\}$, let $\mathsf{H}_i$ and $\mathsf{G}_i$ be real Hilbert spaces, let $\mathsf{C}_i$ be a closed convex subset of $\mathsf{H}_i$, let $\mathsf{L}_i\colon\mathsf{H}_i\to\mathsf{G}_i$ be a linear bounded operator with closed range,
let $\mathsf{S}_i=\menge{\mathsf{x}\in \mathsf{C}_i}{\mathsf{L}_i\mathsf{x}=\mathsf{b}_i}$, where $\mathsf{b}_i=\mathsf{L}_i\mathsf{e}_i$ for some $\mathsf{e}_i\in\mathsf{H}_i$, let $\chi\in\RPP$, and let $\mathsf{f}\colon\mathsf{H}_1\times\mathsf{H}_2\to\RR$ 
be a differentiable function with a $\chi$--lipschitzian gradient
such that, for every $\mathsf{z}_1\in\mathsf{H}_1$, $\mathsf {f}(\mathsf{z}_1,\cdot)$ is concave and, for every $\mathsf{z}_2\in\mathsf{H}_2$, 
$\mathsf{f}(\cdot,\mathsf{z}_2)$ is convex. Moreover suppose that 
$\inte(\mathsf{C}_1-\mathsf{e}_1)\cap\ker\mathsf{L}_1\neq\emp$ and $\inte(\mathsf{C}_2-\mathsf{e}_2)\cap\ker\mathsf{L}_2\neq\emp$.
The problem is to 
\begin{equation}
\label{e:ZS}
\text{find}\quad \mathsf{x}_1\in\mathsf{S}_1\quad \text{and}\quad \mathsf{x}_2\in\mathsf{S}_2\quad
\text{such that}\quad
\begin{cases}
\mathsf{x}_1\in\Argmind{\mathsf{z}_1\in \mathsf{S}_1}{\mathsf{f}(\mathsf{z}_1,\mathsf{x}_2)}\\
\mathsf{x}_2\in\Argmax{\mathsf{z}_2\in\mathsf{S}_2}{\mathsf{f}(\mathsf{x}_1,\mathsf{z}_2)},
\end{cases}
\end{equation}
under the assumption that solutions exist.
\end{problem}

Problem~\ref{prob:selle} is a generic zero-sum game in which the sets $\mathsf{S}_1$ and $\mathsf{S}_2$ are usually convex bounded sets representing mixed strategy spaces. For example, if, for every $i\in\{1,2\}$, $\mathsf{H}_i=\RR^{N_i}$, $\mathsf{C}_i$ is the positive orthant, $\mathsf{G}_i\equiv\RR$, $\mathsf{b}_i\equiv 1$, and $\mathsf{L}_i$ is the sum of the components in the space $\RR^{N_i}$, $\mathsf{S}_i$ is the simplex in $\RR^{N_i}$. In that case, for a bilinear function $\mathsf{f}$, Problem~\ref{prob:selle} reduces to a finite zero-sum game. Beyond this particular case, Problem~\ref{prob:selle} covers continuous zero-sum games in which mixed strategies are distributions and $\mathsf{L}_1$ and $\mathsf{L}_2$ are integral operators.

As far as we know, some attempts for solving \eqref{e:ZS} are proposed in \cite{Atto08,Atto07} in particular cases when the function $\mathsf{f}$ has a special separable structure with specific coupling schemes. In this particular context they propose alternating methods for finding a Nash equilibrium. On the other hand, a method proposed in \cite{Nash} can solve \eqref{e:ZS} when the projections onto $\mathsf{S}_1$ and $\mathsf{S}_2$ are computable. However, in infinite dimension this projections are not always easy to compute, as we will discuss in Example~\ref{ex:1} below. The following result provides an algorithm for solving Problem~\ref{prob:selle} in the general case, which is obtained as a consequence of Corollary~\ref{c:0}. The method avoids the projections onto $\mathsf{S}_1$ and $\mathsf{S}_2$ by alternating simpler projections onto $\mathsf{C}_1$, $\mathsf{C}_2$, $\ker(\mathsf{L}_1)$, and $\ker(\mathsf{L}_2)$. Let us first introduce the {\em generalized Moore-Penrose inverse} of a bounded linear operator $\mathsf{L}\colon\mathsf{H}\to\mathsf{G}$ with closed range, defined by 
$\mathsf{L}^{\dagger}\colon\mathsf{G}\to\mathsf{H}\colon\mathsf{y}\mapsto P_{C_{\mathsf{y}}}0$, where, for every $\mathsf{y}\in\mathsf{G}$, $C_{\mathsf{y}}=\menge{\mathsf{x}\in\mathsf{H}}{\mathsf{L}^*\mathsf{L}\mathsf{x}=\mathsf{L}^*\mathsf{y}}$. The operator $\mathsf{L}^{\dagger}$ is also linear and bounded and, in the particular case when $\mathsf{L}^*\mathsf{L}$ is invertible, $\mathsf{L}^{\dagger}=(\mathsf{L}^*\mathsf{L})^{-1}\mathsf{L}^*$. For further details and properties the reader is referred to \cite[Section~3]{Livre1}.

\begin{theorem}
\label{t:selle}
Under the notation and assumptions of Problem~\ref{prob:selle}, let $\varepsilon\in\left]0,1\right[$, let 
$\gamma\in\left]0,1/\chi\right[$, and let $(\lambda_n)_{n\in\NN}$ be a sequence in $\left[\varepsilon,1\right]$.
Moreover,
let $(\mathsf{z}_{1,0},\mathsf{z}_{2,0})\in\mathsf{H}_1\oplus\mathsf{H}_2$, and iterate, for every $n\in\NN$, 
\begin{align}
\label{e:tseng1selle}
&\left\lfloor 
\begin{array}{l}
\mathsf{u}_{1,n}=\mathsf{z}_{1,n}-\mathsf{L}_1^*\mathsf{L}_1^{*\dagger}\mathsf{z}_{1,n}\\
\mathsf{u}_{2,n}=\mathsf{z}_{2,n}-\mathsf{L}_2^*\mathsf{L}_2^{*\dagger}\mathsf{z}_{2,n}\\
\mathsf{g}_{1,n}=\nabla
\big(\mathsf{f}(\cdot,\mathsf{e}_2+\mathsf{u}_{2,n})\big)(\mathsf{e}_1+\mathsf{u}_{1,n})-\mathsf{L}_1^*\mathsf{L}_1^{*\dagger}\nabla
\big(\mathsf{f}(\cdot,\mathsf{e}_2+\mathsf{u}_{2,n})\big)(\mathsf{e}_1+\mathsf{u}_{1,n})\\
\mathsf{g}_{2,n}=-\nabla
\big(\mathsf{f}(\mathsf{e}_1+\mathsf{u}_{1,n},\cdot)\big)(\mathsf{e}_2+\mathsf{u}_{2,n})+\mathsf{L}_2^*\mathsf{L}_2^{*\dagger}\nabla
\big(\mathsf{f}(\mathsf{e}_1+\mathsf{u}_{1,n},\cdot)\big)(\mathsf{e}_2+\mathsf{u}_{2,n})\\
\mathsf{r}_{1,n}=\mathsf{z}_{1,n}-\gamma\mathsf{g}_{1,n}\\
\mathsf{r}_{2,n}=\mathsf{z}_{2,n}-\gamma\mathsf{g}_{2,n}\\
\mathsf{p}_{1,n}=P_{\mathsf{C}_1}(\mathsf{r}_{1,n}+\mathsf{e}_1)-\mathsf{e}_1\\
\mathsf{p}_{2,n}=P_{\mathsf{C}_2}(\mathsf{r}_{2,n}+\mathsf{e}_2)-\mathsf{e}_2\\
\mathsf{v}_{1,n}=\mathsf{p}_{1,n}-\mathsf{L}_1^*\mathsf{L}_1^{*\dagger}\mathsf{p}_{1,n}\\
\mathsf{v}_{2,n}=\mathsf{p}_{2,n}-\mathsf{L}_2^*\mathsf{L}_2^{*\dagger}\mathsf{p}_{2,n}\\
\mathsf{s}_{1,n}=2\mathsf{v}_{1,n}-\mathsf{p}_{1,n}+\mathsf{L}_1^*\mathsf{L}_1^{*\dagger}\mathsf{r}_{1,n}\\
\mathsf{s}_{2,n}=2\mathsf{v}_{2,n}-\mathsf{p}_{2,n}+\mathsf{L}_2^*\mathsf{L}_2^{*\dagger}\mathsf{r}_{2,n}\\
\mathsf{h}_{1,n}=\nabla
\big(\mathsf{f}(\cdot,\mathsf{e}_2+\mathsf{v}_{2,n})\big)(\mathsf{e}_1+\mathsf{v}_{1,n})-\mathsf{L}_1^*\mathsf{L}_1^{*\dagger}\nabla
\big(\mathsf{f}(\cdot,\mathsf{e}_2+\mathsf{v}_{2,n})\big)(\mathsf{e}_1+\mathsf{v}_{1,n})\\
\mathsf{h}_{2,n}=-\nabla
\big(\mathsf{f}(\mathsf{e}_1+\mathsf{v}_{1,n},\cdot)\big)(\mathsf{e}_2+\mathsf{v}_{2,n})+\mathsf{L}_2^*\mathsf{L}_2^{*\dagger}\nabla
\big(\mathsf{f}(\mathsf{e}_1+\mathsf{v}_{1,n},\cdot)\big)(\mathsf{e}_2+\mathsf{v}_{2,n})\\
\mathsf{t}_{1,n}=\mathsf{s}_{1,n}-\gamma\mathsf{h}_{1,n}
\\
\mathsf{t}_{2,n}=\mathsf{s}_{2,n}-\gamma\mathsf{h}_{2,n}
\\
\mathsf{z}_{1,n+1}=\mathsf{z}_{1,n}+\lambda_n(\mathsf{t}_{1,n}-\mathsf{r}_{1,n})\\
\mathsf{z}_{2,n+1}=\mathsf{z}_{2,n}+\lambda_n(\mathsf{t}_{2,n}-\mathsf{r}_{2,n}).
\end{array}
\right.
\end{align}
Then there exists a solution 
$(\overline{\mathsf{x}}_1,\overline{\mathsf{x}}_2)$ to Problem~\ref{prob:selle} such 
that $\mathsf{z}_{1,n}+\mathsf{e}_1\weakly\overline{\mathsf{x}}_1$ and $\mathsf{z}_{2,n}+\mathsf{e}_2\weakly\overline{\mathsf{x}}_2$.
\end{theorem}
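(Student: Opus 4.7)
The plan is to realize Problem~\ref{prob:selle} as a concrete instance of Problem~\ref{prob:1} in the product space $\HH=\mathsf{H}_1\oplus\mathsf{H}_2$, and then recognize the iteration \eqref{e:tseng1selle} as the algorithm of Corollary~\ref{c:0}. I would set $V=\ker\mathsf{L}_1\times\ker\mathsf{L}_2$, which is a closed vector subspace since each $\mathsf{L}_i$ is bounded; because $\mathsf{L}_i$ has closed range one has $(\ker\mathsf{L}_i)^\bot=\ran\mathsf{L}_i^*$ and the projection formula $P_{(\ker\mathsf{L}_i)^\bot}=\mathsf{L}_i^*\mathsf{L}_i^{*\dagger}$ (a standard consequence of the Moore--Penrose machinery referenced in the statement). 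On $\HH$ I would then define
\begin{align*}
A\colon(\mathsf{y}_1,\mathsf{y}_2)&\mapsto N_{\mathsf{C}_1}(\mathsf{y}_1+\mathsf{e}_1)\times N_{\mathsf{C}_2}(\mathsf{y}_2+\mathsf{e}_2),\\
B\colon(\mathsf{y}_1,\mathsf{y}_2)&\mapsto\bigl(\nabla_1\mathsf{f}(\mathsf{y}_1+\mathsf{e}_1,\mathsf{y}_2+\mathsf{e}_2),\,-\nabla_2\mathsf{f}(\mathsf{y}_1+\mathsf{e}_1,\mathsf{y}_2+\mathsf{e}_2)\bigr).
\end{align*}
The operator $A$ is maximally monotone by componentwise inspection, and $B$ is $\chi$-Lipschitzian and monotone by the familiar fact that the saddle operator of a differentiable convex-concave function with Lipschitz gradient is monotone and Lipschitz.

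Next, I would translate the saddle-point conditions into $0\in A+B+N_V$. Writing $\mathsf{y}_i=\mathsf{x}_i-\mathsf{e}_i$, the constraint $\mathsf{L}_i\mathsf{x}_i=\mathsf{b}_i$ becomes $\mathsf{y}_i\in\ker\mathsf{L}_i$, so that $\mathsf{S}_i-\mathsf{e}_i=(\mathsf{C}_i-\mathsf{e}_i)\cap\ker\mathsf{L}_i$. Under the qualification $\inte(\mathsf{C}_i-\mathsf{e}_i)\cap\ker\mathsf{L}_i\neq\emp$ the normal cone sum rule gives $N_{\mathsf{S}_i}(\mathsf{x}_i)=N_{\mathsf{C}_i}(\mathsf{x}_i)+(\ker\mathsf{L}_i)^\bot$, so that writing out $0\in A(\mathsf{y})+B(\mathsf{y})+N_V(\mathsf{y})$ reproduces exactly the pair of first-order optimality conditions for \eqref{e:ZS}. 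Nonemptiness of $\zer(A+B+N_V)$ is therefore equivalent to the assumed existence of a saddle point.

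Third, I would verify line by line that \eqref{e:tseng1selle} is \eqref{e:algo} under these identifications. Setting $z_n=(\mathsf{z}_{1,n},\mathsf{z}_{2,n})$, the lines defining $\mathsf{u}_{i,n}$ amount to $P_Vz_n=(\mathsf{u}_{1,n},\mathsf{u}_{2,n})$. Then $(\mathsf{g}_{1,n},\mathsf{g}_{2,n})=P_VBP_Vz_n$, since $\nabla\mathsf{f}$ is evaluated at $\mathsf{e}_i+\mathsf{u}_{i,n}=\mathsf{e}_i+P_{\mathsf{V}_i}\mathsf{z}_{i,n}$ and post-multiplied by $\Id-\mathsf{L}_i^*\mathsf{L}_i^{*\dagger}=P_{\mathsf{V}_i}$. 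The resolvent $J_{\gamma A}$ splits over the factors and each factor is a shifted normal cone; since $N_{\mathsf{C}_i}$ is a cone a short computation yields $J_{\gamma N_{\mathsf{C}_i}(\cdot+\mathsf{e}_i)}(\mathsf{r}_i)=P_{\mathsf{C}_i}(\mathsf{r}_i+\mathsf{e}_i)-\mathsf{e}_i$, which is exactly $\mathsf{p}_{i,n}$. The combination $\mathsf{s}_{i,n}=2\mathsf{v}_{i,n}-\mathsf{p}_{i,n}+\mathsf{L}_i^*\mathsf{L}_i^{*\dagger}\mathsf{r}_{i,n}$ is the componentwise rewriting of $s_n=2P_Vp_n-p_n+r_n-P_Vr_n$, and finally $\mathsf{t}_{i,n}-\mathsf{r}_{i,n}$ reproduces $t_n-r_n$. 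With $\gamma\in\left]0,1/\chi\right[$ the step-size hypothesis of Corollary~\ref{c:0} is met, so $z_n\weakly\bar z$, and the saddle point is recovered through the equivalence established in the second step.

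The main obstacle I expect is not analytic but bookkeeping: matching each line of \eqref{e:tseng1selle} with the corresponding term of \eqref{e:algo} in the presence of the shift $\mathsf{e}_i$, and reliably exchanging the projection $P_{\ker\mathsf{L}_i}$ with the operator $\Id-\mathsf{L}_i^*\mathsf{L}_i^{*\dagger}$. A small but essential verification is that $J_{\gamma A}$ commutes correctly with the shift so that the resolvent step reads $P_{\mathsf{C}_i}(\mathsf{r}_{i,n}+\mathsf{e}_i)-\mathsf{e}_i$ rather than the naive $P_{\mathsf{C}_i}\mathsf{r}_{i,n}$, and that the linear combination defining $\mathsf{s}_{i,n}$ indeed collapses $P_{\mathsf{V}_i}\mathsf{s}_{i,n}$ to $\mathsf{v}_{i,n}$, which is what makes the evaluation of $\nabla\mathsf{f}$ in $\mathsf{h}_{i,n}$ take place at $\mathsf{e}_i+\mathsf{v}_{i,n}$ as required.
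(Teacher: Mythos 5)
Your proposal is correct and follows essentially the same route as the paper: the same product space $\mathsf{H}_1\oplus\mathsf{H}_2$ with $V=\ker\mathsf{L}_1\times\ker\mathsf{L}_2$, the same shifted normal-cone operator $A$ and saddle operator $B$, the same use of the qualification condition to split $N_{\mathsf{S}_i}$ into $N_{\mathsf{C}_i}+(\ker\mathsf{L}_i)^{\bot}$, and the same identification of \eqref{e:tseng1selle} with \eqref{e:algo} followed by an appeal to Corollary~\ref{c:0}. The bookkeeping points you flag (the shift in the resolvent, $P_{\ker\mathsf{L}_i}=\Id-\mathsf{L}_i^*\mathsf{L}_i^{*\dagger}$) are exactly the computations the paper carries out.
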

\begin{proof} 
It follows from \cite[Theorem~16.2]{Livre1} that Problem~\ref{prob:selle} can be written equivalently as the problem of finding $\mathsf{x}_1$ and $\mathsf{x}_2$ such that $0\in\partial(\iota_{\mathsf{S}_1}+\mathsf{f}(\cdot,\mathsf{x}_2))(\mathsf{x}_1)$ and $0\in\partial(\iota_{\mathsf{S}_2}-\mathsf{f}(\mathsf{x}_1,\cdot))(\mathsf{x}_2)$,
which, because of \cite[Corollary~16.38]{Livre1}, is equivalent to
\begin{equation}
\label{e:auxselle}
\begin{cases}
0\in N_{\mathsf{S}_1}(\mathsf{x}_1)+\nabla\big(\mathsf{f}(\cdot,\mathsf{x}_2)\big)(\mathsf{x}_1)\\
0\in N_{\mathsf{S}_2}(\mathsf{x}_2)-\nabla\big(\mathsf{f}(\mathsf{x}_1,\cdot)\big)(\mathsf{x}_2).
\end{cases}
\end{equation}
Now since, for every $i\in\{1,2\}$, $\mathsf{S}_i=\mathsf{C}_i\cap\mathsf{L}_i^{-1}(\mathsf{b}_i)=\mathsf{C}_i\cap(\mathsf{e}_i+\ker\mathsf{L}_i)$, it follows from qualification conditions assumed in Problem~\ref{prob:selle} that \eqref{e:auxselle} is equivalent to
\begin{equation}
\label{e:auxselle2}
\begin{cases}
0\in N_{\mathsf{C}_1}(\mathsf{e}_1+\mathsf{z}_1)+N_{\ker\mathsf{L}_1}(\mathsf{z}_1)+\nabla\big(\mathsf{f}(\cdot,\mathsf{e}_2+\mathsf{z}_2)\big)(\mathsf{e}_1+\mathsf{z}_1)\\
0\in N_{\mathsf{C}_2}(\mathsf{e}_2+\mathsf{z}_2)+N_{\ker\mathsf{L}_2}(\mathsf{z}_2)-\nabla\big(\mathsf{f}(\mathsf{e}_1+\mathsf{z}_1,\cdot)\big)(\mathsf{e}_2+\mathsf{z}_2),
\end{cases}
\end{equation}
where $\mathsf{z}_1=\mathsf{x}_1-\mathsf{e}_1$ and $\mathsf{z}_2=\mathsf{x}_2-\mathsf{e}_2$. Hence, by defining
\begin{equation}
\label{e:defselle}
\begin{cases}
V=\ker(\mathsf{L}_1)\times\ker(\mathsf{L}_2)\\
A\colon\mathsf{H}_1\times\mathsf{H}_2\to 2^{\mathsf{H}_1\times\mathsf{H}_2}\colon(\mathsf{z}_1,\mathsf{z}_2)\mapsto N_{\mathsf{C}_1\times\mathsf{C}_2}(\mathsf{e}_1+\mathsf{z}_1,\mathsf{e}_2+\mathsf{z}_2)\\
B\colon\mathsf{H}_1\times\mathsf{H}_2\to\mathsf{H}_1\times\mathsf{H}_2\colon(\mathsf{z}_1,\mathsf{z}_2)\mapsto
\begin{pmatrix}
\nabla\big(\mathsf{f}(\cdot,\mathsf{e}_2+\mathsf{z}_2)\big)(\mathsf{e}_1+\mathsf{z}_1)\\
-\nabla\big(\mathsf{f}(\mathsf{e}_1+\mathsf{z}_1,\cdot)\big)(\mathsf{e}_2+\mathsf{z}_2)
\end{pmatrix},
\end{cases}
\end{equation}
Problem~\ref{prob:selle} is equivalent to find $\mathsf{z}_1\in\mathsf{H}_1$ and $\mathsf{z}_2\in\mathsf{H}_2$ such that 
$0\in A(\mathsf{z}_1,\mathsf{z}_2)+B(\mathsf{z}_1,\mathsf{z}_2)+N_V(\mathsf{z}_1,\mathsf{z}_2)$,
where $V$ is clearly a closed vectorial subspace of $\mathsf{H}_1\times\mathsf{H}_2$, $A$ is maximally monotone \cite[Proposition~20.22]{Livre1}, and $B$ is monotone (\cite[Proposition~20.22]{Livre1} and \cite{Rock70}). Moreover,  since $\nabla\mathsf{f}$ is $\chi$-lipschitzian, $B$ is also $\chi$-lipschitzian. On the other hand, it follows from \cite[Proposition~3.28(iii)]{Livre1} and \cite[Proposition~23.15(iii)]{Livre1} that
$
P_V\colon(\mathsf{z}_1,\mathsf{z}_2)\mapsto\big(\mathsf{z}_1-\mathsf{L}_1^*\mathsf{L}_1^{*\dagger}\mathsf{z}_1,\mathsf{z}_2-\mathsf{L}_2^*\mathsf{L}_2^{*\dagger}\mathsf{z}_2\big)$,
$
J_{\gamma A}\colon(\mathsf{z}_1,\mathsf{z}_2)\mapsto\big(P_{\mathsf{C}_1}(\mathsf{z}_1+\mathsf{e}_1)-\mathsf{e}_1,P_{\mathsf{C}_2}(\mathsf{z}_2+\mathsf{e}_2)-\mathsf{e}_2\big)
$
and we deduce that \eqref{e:tseng1selle} is a particular case of \eqref{e:algo} when $A$, $B$, and $V$ are defined by \eqref{e:defselle}. Altogether, the result follows from Corollary~\ref{c:0}.
\end{proof}

\begin{remark}
Note that the proposed method does not need the projection onto $\mathsf{S}_1$ and $\mathsf{S}_2$ at each iteration, but it converges to solution strategies belonging to these sets. This new feature is very useful in cases in which the projection onto $\mathsf{S}_1$ and $\mathsf{S}_2$ are not available or are not easy to compute as the following example illustrates. 
\end{remark}
\begin{example}
\label{ex:1}
We consider a $2$-player zero-sum game in which $X_1\subset\RR^{N_1}$ is bounded and represents the set of pure strategies of player $1$, and  
$S_1=\menge{f\in L^2(X_1)}
{f\geq 0\:\text{a.e.},\int_{X_1}f(x)dx=1}$
is her set of mixed strategies, which are distributions of probability in $L^2(X_1)$ ($X_2$, $N_2$, and $S_2$ are defined
likewise). We recall that $L^2(X)$ stands for the set of square-integrable functions $f\colon X\subset\RR^n\to\RX$. Moreover, let $F\in L^2(X_1\times X_2)$ be a function representing the payoff for player 1 and let $-F$ be the payoff of player 2. The problem is to
\begin{equation}
\label{e:ZStrong2}
\text{find}\:\: f_1\in S_1\:\:\:\text{and}\:\:\:f_2\in S_2
\:\:\text{such that}\:\:
\begin{cases}
f_1\in\Argmind{g_1\in S_1}{\displaystyle{\int_{X_1}\int_{X_2}F(x_1,x_2)g_1(x_1)f_2(x_2)dx_2dx_1}}\\
f_2\in\Argmax{g_2\in S_2}{\displaystyle{\int_{X_1}\int_{X_2}F(x_1,x_2)f_1(x_1)g_2(x_2)dx_2dx_1}}.
\end{cases}
\end{equation}
Note that $S_1$ and $S_2$ are closed convex sets in $L^2(X_1)$ and $L^2(X_2)$, respectively. Hence, the projection of any square-integrable function onto $S_1$ or $S_2$ is well defined. However, these projections are not easy to compute. A possible way to avoid the explicit computation of these projections is to split $S_1$ and $S_2$ in $S_1=\mathsf{C}_1\cap(\mathsf{e}_1+\ker\mathsf{L}_1)$ and $S_2=\mathsf{C}_2\cap(\mathsf{e}_2+\ker\mathsf{L}_2)$ as in the proof of Theorem~\ref{t:selle}, where, for every $i\in\{1,2\}$, $\mathsf{C}_i=\menge{f\in L^2(X_i)}{f\geq 0\quad\text{a.e.}}$, $\mathsf{e}_i\equiv(m_i(X_i))^{-1}$, $\mathsf{L}_i\colon f\mapsto\int_{X_i}f(x)dx$,
and $m_i(X_i)$ stands for the Lebesgue measure of the set $X_i$. Note that $\mathsf{e}_1\in\inte\mathsf{C}_1\cap(\mathsf{e}_1+\ker\mathsf{L}_1)$ and 
$\mathsf{e}_2\in\inte\mathsf{C}_2\cap(\mathsf{e}_2+\ker\mathsf{L}_2)$, which yield the qualification condition in Problem~\ref{prob:selle}. For every $i\in\{1,2\}$, let $\mathsf{H}_i=L^2(X_i)$ and define the function $\mathsf{f}\colon\mathsf{H}_1\times\mathsf{H}_2\to\RR\colon(f_1,f_2)\mapsto \int_{X_1}\int_{X_2}F(x_1,x_2)f_1(x_1)f_2(x_2)dx_2dx_1$,
which is bilinear, differentiable, and it follows from $F\in L^2(X_1\times X_2)$ that
\begin{equation}
\nabla\mathsf{f}\colon(f_1,f_2)\mapsto\Big(\int_{X_2}F(\cdot,x_2)f_2(x_2)dx_2,\int_{X_1}F(x_1,\cdot)f_1(x_1)dx_1\Big)\in\mathsf{H}_1\times\mathsf{H}_2
\end{equation}
and that $\nabla\mathsf{f}$ is $\chi$-lipschitzian with $\chi=\|F\|_{L^2(X_1\times X_2)}$.
Thus, by defining $\mathsf{G}_i=\RR$, \eqref{e:ZStrong2} is a particular instance of Problem~\ref{prob:selle}. Note that, for every $i\in\{1,2\}$, $\mathsf{L}_i^*\colon\RR\to L^2(X_i)\colon\xi\mapsto\delta_{\xi}$, where, for every $\xi\in\RR$, $\delta_{\xi}\colon x\mapsto \xi$ is the constant function. Moreover, the operator $\mathsf{L}_i\circ \mathsf{L}_i^*\colon\xi\to m_i(X_i)\xi$ is invertible with $(\mathsf{L}_i\circ \mathsf{L}_i^*)^{-1}\colon\xi\mapsto\xi/m_i(X_i)$, which yields
$\mathsf{L}_i^{*\dagger}=(\mathsf{L}_i\circ \mathsf{L}_i^*)^{-1}\mathsf{L}_i=\mathcal{M}_i$, where $ \mathcal{M}_i\colon f\mapsto\delta_{\bar{f}}$ and $\bar{f}=\int_{X_i}f(x)dx/m_i(X_i)$ is the mean value of $f$. In addition, for every $i\in\{1,2\}$,
$P_{\mathsf{C}_i}\colon f\mapsto f_+\colon t\mapsto\max\{0,f(t)\}$.
Altogether, \eqref{e:tseng1selle} reduces to
\begin{align}
\label{e:tseng1selle2}
&\left\lfloor 
\begin{array}{l}
u_{1,n}=z_{1,n}-\mathcal{M}_1(z_{1,n})\\
u_{2,n}=z_{2,n}-\mathcal{M}_2(z_{2,n})\\
g_{1,n}=G_1+\Big(\int_{X_2}F(\cdot,x_2)u_{2,n}(x_2)dx_2-
\mathcal{M}_1\big(\int_{X_2}F(\cdot,x_2)u_{2,n}(x_2)dx_2\big)\Big)\\
g_{2,n}=-G_2-\Big(\int_{X_1}F(x_1,\cdot)u_{1,n}(x_1)dx_1-\mathcal{M}_2\big(\int_{X_1}F(x_1,\cdot)u_{1,n}(x_1)dx_1\big)\Big)\\
r_{1,n}=z_{1,n}-\gamma g_{1,n}\\
r_{2,n}=z_{2,n}-\gamma g_{2,n}\\
p_{1,n}=\big[r_{1,n}+m_1(X_1)^{-1}\big]_+-m_1(X_1)^{-1}\\
p_{2,n}=\big[r_{2,n}+m_2(X_2)^{-1}\big]_+-m_2(X_2)^{-1}\\
v_{1,n}=p_{1,n}-\mathcal{M}_1(p_{1,n})\\
v_{2,n}=p_{2,n}-\mathcal{M}_2(p_{2,n})\\
s_{1,n}=2v_{1,n}-p_{1,n}+\mathcal{M}_1(r_{1,n})\\
s_{2,n}=2v_{2,n}-p_{2,n}+\mathcal{M}_2(r_{2,n})\\
h_{1,n}=G_1+\Big(\int_{X_2}F(\cdot,x_2)v_{2,n}(x_2)dx_2-
\mathcal{M}_1\big(\int_{X_2}F(\cdot,x_2)v_{2,n}(x_2)dx_2\big)\Big)\\
h_{2,n}=-G_2-\Big(\int_{X_1}F(x_1,\cdot)v_{1,n}(x_1)dx_1-\mathcal{M}_2\big(\int_{X_1}F(x_1,\cdot)v_{1,n}(x_1)dx_1\big)\Big)\\
t_{1,n}=s_{1,n}-\gamma h_{1,n}
\\
t_{2,n}=s_{2,n}-\gamma h_{2,n}
\\
z_{1,n+1}=z_{1,n}+\lambda_n(t_{1,n}-r_{1,n})\\
z_{2,n+1}=z_{2,n}+\lambda_n(t_{2,n}-r_{2,n}),
\end{array}
\right.
\end{align}
where 
\begin{equation}
\begin{cases}
G_1\colon z_1\mapsto\displaystyle {\mathcal{M}_2(F(z_1,\cdot))-\frac{1}{m_1(X_1)m_2(X_2)}\int_{X_1}\int_{X_2}F(x_1,x_2)dx_2dx_1}\\
G_2\colon z_2\mapsto\displaystyle {\mathcal{M}_1(F(\cdot,z_2))-\frac{1}{m_1(X_1)m_2(X_2)}\int_{X_1}\int_{X_2}F(x_1,x_2)dx_2dx_1
}\end{cases}
\end{equation}
and $\gamma\in]0,1/\chi[$. Altogether, Theorem~\ref{t:selle} asserts that the sequences $(z_{1,n}+m_1(X_1)^{-1})_{n\in\NN}$ and $(z_{2,n}+m_2(X_2)^{-1})_{n\in\NN}$ 
converge to $\overline{f}_1\in\mathsf{H}_1$ and
$\overline{f}_2\in\mathsf{H}_2$, respectively, where 
$(\overline{f}_1,\overline{f}_2)$ is a solution to 
\eqref{e:ZStrong2}. 

Note that, in the particular case when $X_1$ and $X_2$ are finite sets of actions (or pure strategies), $S_1$ and $S_2$ are finite dimensional simplexes, and
$F\colon (x_1,x_2)\mapsto x_1^{\top}\mathsf{F}x_2$ is a payoff matrix. In this case \eqref{e:tseng1selle2} provides a first order method for finding Nash equilibria in the finite zero-sum game  (for complements and background on finite games, see~\cite{Weib95})
\begin{equation}
\label{e:ZS3}
\text{find}\quad x_1\in S_1\:\:\:\text{and}\:\:\:x_2\in S_2
\quad\text{such that}\quad
\begin{cases}
x_1\in\Argmind{y_1\in S_1}{x_1^{\top}\mathsf{F}x_2}\\
x_2\in\Argmax{y_2\in S_2}{x_1^{\top}\mathsf{F}x_2}.
\end{cases}
\end{equation}
When a large number of pure actions are involved (e.g., Texas Hold'em poker) classical linear programming methods for solving \eqref{e:ZStrong2} are enormous and unsolvable via standard algorithms as simplex. Other attempts using acceleration schemes for obtaining good convergence rates are provided in \cite{Gilp12}.  However, the proposed method does not guarantee the convergence of the iterates. Other methods need to compute a Nash equilibrium at each iteration, which is costly numerically \cite{Zink99}. The method obtained from \eqref{e:tseng1selle2} is an explicit convergent method that solves \eqref{e:ZS3} overcoming previous difficulties. Numerical simulations and comparisons with other methods in the literature are part of further research.
\end{example}
\section{Conclusions}
We provide a fully split algorithm for finding a zero of $A+B+N_V$. The proposed method exploits the intrinsic properties of each of the operators involved by activating explicitly the single-valued operator $B$ and by computing the resolvent of $A$ and projections onto $V$. Weak convergence to a zero of $A+B+N_V$ is guaranteed and applications to monotone inclusions involving $m$ maximally monotone operators, to primal-dual composite inclusions involving partial sums of monotone operators, and continuous zero-sum games are provided. In addition, the partial sum of two set-valued operators with respect to a closed vectorial subspace is introduced. This operation preserves monotonicity and a further study will be done in a future work. Furthermore, in the zero-sum games context, a splitting method is provided for computing Nash equilibria. The algorithm replaces the projections onto mixed strategy spaces (infinite dimensional simplexes) by alternate simpler projections.


\begin{thebibliography}{99}

\bibitem{Atto08}
Attouch, H., Bolte, J., Redont, P., Soubeyran, A.:
Alternating proximal algorithms for weakly coupled convex minimization problems. Applications to dynamical games and PDE's.
J. Convex Anal. 15, 485--506 (2008)

\bibitem{Sicon1}
Attouch, H., Brice\~no-Arias, L.M., Combettes, P.L.:
A parallel splitting method for coupled monotone inclusions.
SIAM J. Control Optim. 48, 3246--3270 (2010)

\bibitem{Atto07}
Attouch, H., Redont, P., Soubeyran, A.:
A new class of alternating proximal minimization algorithms with costs-to-move.
SIAM J. Optim. 18, 1061--1081 (2007)

\bibitem{Aubi90} 
Aubin, J.-P., Frankowska, H.:
Set-Valued Analysis. Birkh\"{a}user, Boston, MA (1990)

\bibitem{Aujo05}
Aujol, J.-F., Aubert, G., Blanc-Feraud, L., Chambolle, A.: Image decomposition into
a bounded variation component and an oscillating
component. J. Math. Imaging Vision 22, 71--88 (2005)

\bibitem{Bail77}
Baillon, J.-B., Haddad, G.: 
Quelques propri\'et\'es des op\'erateurs angle-born\'es et
$n$-cycliquement monotones.
Israel J. Math. 26, 137--150 (1977)

\bibitem{Livre1} 
Bauschke, H.H., Combettes, P.L.:
Convex Analysis and Monotone Operator Theory in Hilbert Spaces. Springer-Verlag, New York (2011)

\bibitem{Bert82}
Bertsekas, D.P., Gafni, E.M.:
Projection methods for variational inequalities with application to the traffic assignment problem.
Math. Programming Stud. 17, 139--159 (1982)

\bibitem{Bot13}
Bo\c{t}, R.I., Csetnek, E.R.: 
A Tseng's type penalty scheme for solving inclusion problems involving linearly composed and parallel-sum type monotone operators. Vietnam J. Math. DOI 10.1007/s10013-013-0050-2 (2013)

\bibitem{Bot12}
Bo\c{t}, R.I., L\'aszl\'o, S.: 
On the generalized parallel sum of two maximal monotone operators of Gossez type (D). 
J. Math. Anal. Appl. 391, 82--98 (2012) 

\bibitem{Nfao1}
Brice\~no-Arias, L.M.:
Outer approximation method for constrained composite fixed point 
problems involving Lipschitz pseudo contractive operators.
Numer. Funct. Anal. Optim. 32, 1099--1115 (2011)

\bibitem{opti1}
Brice\~no-Arias, L.M.:
Forward-Douglas-Rachford splitting and forward-partial inverse method for solving monotone inclusions.	
Optimization, to appear.

\bibitem{Siopt3}
Brice\~no-Arias, L.M., Combettes, P.L.: 
A monotone+skew splitting model for composite monotone inclusions in duality. SIAM J. Optim. 21, 1230--1250 (2011)

\bibitem{Nash}
Brice\~no-Arias, L.M., Combettes, P.L.:
Monotone operator methods for Nash equilibria in non-potential games. In: Bailey, D., Bauschke, H.H., Borwein, P.,  Garvan, F., Th\'era, M., Vanderwerff, J., Wolkowicz, H. (eds.): Computational and Analytical Mathematics, vol. 50, pp. 143--159. Springer, New York (2013)

\bibitem{Cham97}
Chambolle, A., Lions, P.L.: 
Image recovery via total variation minimization and related
problems. Numer. Math. 76, 167--188 (1997)

\bibitem{Joca09}
Combettes, P.L.:
Iterative construction of the resolvent of a sum of 
maximal monotone operators.
J. Convex Anal. 16, 727--748 (2009)

\bibitem{Invp08} 
Combettes, P.L., Pesquet, J.-C.: 
A proximal decomposition method for solving convex variational inverse
problems. Inverse Problems 24, 065014, 27 pp (2008)

\bibitem{Comb12}
Combettes, P.L., Pesquet, J.-C.: 
Primal-dual splitting algorithm for solving inclusions with mixtures
of composite, Lipschitzian, and parallel-sum type monotone operators.
Set-Valued Var. Anal. 20, 307--330 (2012) 

\bibitem{Daub04}
Daubechies, I., Defrise, M., De Mol, C.: 
An iterative thresholding algorithm for linear
inverse problems with a sparsity constraint. 
Comm. Pure Appl. Math. 57, 1413--1457 (2004)

\bibitem{Ecks92} 
Eckstein, J., Bertsekas, D.P.: 
On the Douglas-Rachford splitting method and the proximal 
point algorithm for maximal monotone operators.
Math. Programming 55, 293--318 (1992)

\bibitem{Fuku96} 
Fukushima, M.: 
The primal Douglas-Rachford splitting algorithm for a class of monotone mappings with applications to the traffic equilibrium problem.
Math. Program. 72, 1--15 (1996)

\bibitem{Gilp12}
Gilpin, A., Pe\~na, J., Sandholm, T.:
First-order algorithm with $\mathcal{O}(\ln(1/\epsilon))$ convergence for $\epsilon$-equilibrium in two-person zero-sum games.
Math. Program. Ser. A 133, 279--298 (2012) 

\bibitem{Hara81} 
Haraux, A.:
Nonlinear Evolution Equations: Global Behavior of Solutions.
Lecture Notes in Math. 841, 
Springer-Verlag, New York (1981)

\bibitem{Jofr07}
Jofr\'e, A., Rockafellar, R.T., Wets, R.J.-B.:
Variational inequalities and economic equilibrium.
Math. Oper. Res. 32, 32--50 (2007)

\bibitem{Lion79} 
Lions, P.-L., Mercier, B.: 
Splitting algorithms for the sum of two nonlinear operators. SIAM J. Numer. Anal. 16,  964--979 (1979)

\bibitem{Lion67} 
Lions, J.-L., Stampacchia, G.:
Variational inequalities. Comm. Pure Appl. Math. 20,  493--519 (1967)

\bibitem{Merc80} 
Mercier, B.: In\'equations Variationnelles de la M\'ecanique. Publications Math\'ematiques d'Orsay, no. 80.01, Universit\'e de Paris-XI, Orsay, France (1980) 

\bibitem{Pass79}
Passty, G.B.: 
Ergodic convergence to a zero of the sum of monotone operators in Hilbert space.
J. Math. Anal. Appl. 72, 383--390 (1979)

\bibitem{Penn12}
Pennanen, T.: 
Introduction to convex optimization in financial markets.
Math. Program. 134, 157--186 (2012) 

\bibitem{Fadi12}
Raguet, H., Fadili, J., Peyr\'e, G.:
Generalized forward-backward splitting.
SIAM J. Imaging Sci. 6, 1199--1226 (2013) 

\bibitem{Rock70}
Rockafellar, R.T.: 
Monotone operators associated with 
saddle-functions and minimax problems.
In: Browder, F.E. (ed.): Nonlinear Functional Analysis Part 1, Proc. Sympos. Pure Math., vol. 18, pp. 241--250.
Amer. Math. Soc., Providence, RI (1970) 

\bibitem{Roc76a} 
Rockafellar, R.T.:
Monotone operators and the proximal point algorithm.
SIAM J. Control Optim. 14, 877--898 (1976)

\bibitem{Shef85} 
Sheffi, Y.:
Urban Transportation Networks: Equilibrium Analysis with
Mathematical Programming Methods. Prentice-Hall, Englewood Cliffs, NJ (1985)

\bibitem{Show97} 
Showalter, R.E.:
Monotone Operators in Banach Space and Nonlinear Partial 
Differential Equations. Mathematical Surveys and
Monographs 49, Amer. Math. Soc., Providence, RI (1997)

\bibitem{Spin83}
Spingarn, J.E.: 
Partial inverse of a monotone operator.
Appl. Math. Optim. 10, 247--265 (1983)

\bibitem{Spin85} 
Spingarn, J.E.: 
Applications of the method of partial inverses to convex programming:
decomposition. Math. Programming 32, 199--223 (1985)

\bibitem{Svai10} 
Svaiter, B.F.:
Weak convergence on Douglas-Rachford method.
SIAM J. Control Optim. 49, 280--287 (2011)

\bibitem{Tsen90} 
Tseng, P.: Further applications of a splitting algorithm to
decomposition in variational inequalities and convex programming. Math. Programming 48, 249--263 (1990)

\bibitem{Tsen91} 
Tseng, P.: Applications of a splitting algorithm to
decomposition in convex programming and variational inequalities. SIAM J. Control Optim. 29, 119--138 (1991)

\bibitem{Tsen00} 
Tseng, P.: 
A modified forward-backward splitting method for 
maximal monotone mappings. SIAM J. Control Optim. 38, 431--446 (2000)

\bibitem{Bang12}
V\~u, B.C.:
A splitting algorithm for dual monotone inclusions
involving cocoercive operators.
Adv. Comput. Math. 38, 667--681 (2013) 

\bibitem{Weib95}
Weibull, J.W.:  
Evolutionary Game Theory. 
MIT Press, Cambridge, MA (1995)

\bibitem{Zeid90}
Zeidler, E.:
Nonlinear Functional Analysis and Its Applications II/B--
Nonlinear Monotone Operators. Springer-Verlag, New York  (1990)

\bibitem{Zink99}
Zinkevich, M., Bowling, M., Burch, N.: A new algorithm for generating equilibria in massive zero-sum games. In: Cohn. A, Ford, K. (eds.): Proceedings of the Twenty-Second AAAI Conference on Artificial Intelligence, vol. 22, pp. 788--794. AAAI Press, MIT Press, Menlo Park, California (2007)

\end{thebibliography}
\end{document}